\theoremstyle{plain}
\newtheorem{theorem}{Theorem}
\newtheorem{corollary}[theorem]{Corollary}
\newtheorem{lemma}[theorem]{Lemma}
\newtheorem{proposition}[theorem]{Proposition}
\theoremstyle{remark}
\newtheorem{claim}{Claim}
\theoremstyle{definition}
\begin{document}
\date{} 

\title [Gauss map of complete minimal surfaces on annular ends ]{Ramification of the Gauss map \\of complete minimal surfaces in $\mathbb R^3$ and $\mathbb R^4$ \\on annular ends}

\author{Gerd Dethloff and Pham Hoang Ha}

\keywords{Minimal surface, Gauss map, Ramification, Value distribution theory.}
\subjclass[2010]{Primary 53A10; Secondary 53C42, 30D35, 32H30}

\begin{abstract} 
In this article, we study the ramification of the Gauss map of complete minimal surfaces in $\mathbb R^3$ and $ \mathbb R^4$ on annular ends. We obtain results which are similar to the ones obtained by Fujimoto (\cite{Fu1}, \cite{Fu2}) and Ru (\cite{Ru1}, \cite{Ru2}) for (the whole) complete minimal surfaces, thus we show that the restriction of the Gauss map to an annular end of such a complete minimal surface cannot have more branching (and in particular not avoid more values) than on the whole complete minimal surface. We thus give an improvement of the results on annular ends of complete minimal surfaces of 
Kao~(\cite{Kao}). \\

\noindent RESUME. Dans ce travail nous obtenons des th\'eor\`{e}mes de ramification de 
l'application de Gauss de certaines classes de surfaces minimales compl\`{e}tes dans $\mathbb R^3$ et $ \mathbb R^4$.
\end{abstract}
\maketitle
\tableofcontents
\section{Introduction}
Let $M$ be a non-flat minimal surface in $\mathbb R^3,$ or more precisely, a non-flat connected oriented minimal surface in $\mathbb R^3.$ By definition, the Gauss map $G$ of $M$ is a map which maps each point $p \in M$ to the unit normal vector $G(p) \in S^2$ of $M$ at $p.$ Instead of $G,$ we study the map $g:= \pi\circ G: M \rightarrow  \mathbb P^1(\mathbb C))$, 
where  $\pi : S^2 \rightarrow \mathbb P^1(\mathbb C)$
is the stereo\-graphic projection.  By associating a holomorphic local coordinate $z=u+\sqrt{-1}v$ with each positive isothermal coordinate system $(u, v),$ $M$ is considered as an open Riemann surface with a conformal metric $ds^2$ and by the assumption of minimality of $M,$ $g$ is a meromorphic function on $M.$ 

In 1988, H. Fujimoto (\cite{Fu1}) proved Nirenberg's conjecture that if $M$ is a complete non-flat minimal surface in $\mathbb R^3,$ then its Gauss map can omit at most 4 points, and the bound is sharp. After that, he also extended that result for  minimal surfaces in $\mathbb R^m.$ \\
\indent In 1993, M. Ru (\cite{Ru2}) refined these results  by studying the Gauss maps of minimal surfaces in $\mathbb R^m$ with ramification. But for our purpose, we here only introduce the case $m =3.$ To give that result, we recall some definitions.\\
\indent One says that $g: M \rightarrow \mathbb P^1(\mathbb C)$ is {\it  ramified over a point} $a=(a_0: a_{1}) \in  \mathbb P^{1}(\mathbb C)$ {\it with multiplicity at least} $e$ if all the zeros of the function $ a_0g_1 - a_{1}g_{0}$ have orders at least $e,$ where $g =(g_0: g_{1})$ is a reduced representation.  If the image of $g$ omits $a,$ one will say that $g$ is {\it ramified over a with multiplicity }$\infty.$
Ru proved : \\
 {\bf Theorem A. }\ {\it Let M be a non-flat complete minimal surface in $\mathbb R^3.$ If there are $q\  (q > 4)$
distinct points $a^1, ... , a^q \in \mathbb P^1(\mathbb C)$ such that the Gauss map of M is ramified over
$a^j$ with multiplicity at least $m_j$ for each j, then $ \sum_{j=1}^q(1 - \frac{1}{m_j})\leq 4.$ }

To prove this result, he constructed a pseudo-metric with negative curvature with ramification and used the previous argument of Fujimoto.

On the other hand, in 1991, S. J. Kao (\cite{Kao}) used the ideas of Fujimoto~(\cite{Fu1}) to show that the Gauss map of an end of a non-flat complete minimal surface in $\mathbb R^3$ that is conformally an annulus $\{ z  : 0 < 1/r < |z| < r \}$ must also assume every value, with at most 4 exceptions. In 2007, L. Jin and M. Ru (\cite{JR}) extended Kao's result to minimal surfaces in $\mathbb R^m.$ Kao (\cite{Kao}) proved : \\
{\bf Theorem B. }\ {\it The Gauss map $g$ on an annular end 
of a non-flat complete minimal surface in $\mathbb R^3$
assumes every value on the unit sphere infinitely often, with the possible exception of four values.}\\

A natural question is whether a result as in Theorem~A for the rami\-fication of the Gauss map still holds on an annular end of  a non-flat complete minimal surface in $\mathbb R^3$. In this paper we give an affirmative answer :
\begin{theorem} \label{T1}
Let $M$ be a non-flat complete minimal surface in $\mathbb R^3$ and let $A \subset M$ be an
annular end of $M$ which is conformal to $\{z  :  0 < 1/r < |z| < r\}$, where $z$
is a conformal coordinate. If there are $q\  (q > 4)$
distinct points $a^1, ... , a^q \in \mathbb P^1(\mathbb C)$ such that the restriction of the Gauss map of $M$ to $A$ is ramified over
$a^j$ with multiplicity at least $m_j$ for each $j$,   then 
\begin{equation} \label{1} \sum_{j=1}^q(1 - \frac{1}{m_j})\leq 4.
\end{equation}  
Moreover, (\ref{1})  still holds if we replace, for all $j=1,...,q$,  $m_j$ by the limit inferior
of the orders of the zeros of the function $ a_0^jg_1 - a_{1}^jg_{0}$ on $A$ (where $g =(g_0: g_{1})$ is a reduced representation and $a^j=(a^j_0:a^j_1)$) and in particular 
by $\infty$ if $g$ takes the value $a^j$ only a finite number of times on $A$.
\end{theorem}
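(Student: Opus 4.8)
\emph{Reductions.} I would first deduce the final ("limit inferior") statement from inequality~(\ref{1}) itself. Let $\mu_j$ be the limit inferior of the orders of the zeros of $a^j_0g_1-a^j_1g_0$ on $A$; as these orders are positive integers, there is a compact subset $K\subset A$ outside of which, for every $j$, all such zeros have order $\ge\mu_j$. Then $A\setminus K$ contains a sub-annular end $A'$ of $M$ — again conformal to an annulus $\{1/r'<|z|<r'\}$ — on which $ds^2$ remains complete toward the ideal boundary of $M$ and on which $g$ is ramified over each $a^j$ with multiplicity $\ge\mu_j$; applying~(\ref{1}) to $A'$ yields $\sum_j(1-1/\mu_j)\le4$ (with $\mu_j=\infty$ exactly when $g$ omits $a^j$ near the end). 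It therefore suffices to prove~(\ref{1}). In doing so I may further assume, via the conformal automorphism $z\mapsto1/z$ of the annulus, that $ds^2$ is complete toward the boundary circle $\Gamma:=\{|z|=r\}$ (the one facing the ideal boundary of $M$), and, after a rotation of $\mathbb P^1(\mathbb C)$, that $a^j\ne\infty$ for all $j$.

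\emph{The auxiliary flat metric.} Suppose, for contradiction, that $\sum_{j=1}^q\gamma_j>4$ with $\gamma_j:=1-1/m_j$ (the indices with $m_j=1$ being vacuous, we discard them). Then $\sum_j\gamma_j-2>2$, so I fix a number $p$ with $\tfrac2{\sum_j\gamma_j-2}<p<1$. On $A$, which carries the global coordinate $z$, the Weierstrass representation gives $ds^2=|f|^2(1+|g|^2)^2|dz|^2$, where $f$ is holomorphic and vanishes to order exactly $2k$ at every pole of $g$ of order $k$, and $g$ (the stereographic projection of the Gauss map) is meromorphic. Following Fujimoto's method as carried out in the proof of Theorem~A (Ru), I would attach to the data $(f,g,a^1,\dots,a^q,p)$ an auxiliary \emph{flat} conformal pseudo-metric $d\tau^2$ on $A$: built from $f$, the derivative $g'$, and the factors $|g-a^j|^{\gamma_j-1}$ raised to powers determined by $p$, it is smooth and positive off a discrete set $E$ (the poles and ramification points of $g$ and the points with $g=a^j$), has Gaussian curvature $\equiv0$ where it is smooth, and — this is where $\sum_j\gamma_j>4$, i.e. the choice of $p$, enters, through a Schwarz--Ahlfors / second-main-theorem type estimate — satisfies $ds\le C\,d\tau$ for a fixed constant $C$ on the complement of suitably small neighbourhoods of the points of $E$.

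\emph{The completeness contradiction.} Being flat, $d\tau^2$ is locally $|h|^2|dz|^2$ with $h$ holomorphic off $E$, so it is real-analytic near some circle $\{|z|=\rho\}$ ($1/r<\rho<r$) disjoint from $E$. Reflecting $d\tau^2$ across that circle and gluing yields a flat conformal pseudo-metric $d\sigma^2$ on all of $A$, equal to $d\tau^2$ near $\Gamma$ and to its mirror image near $\{|z|=1/r\}$. If $d\sigma^2$ were complete on $A$, the classical lemma of Fujimoto on complete flat (pseudo-)metrics would force $A$ to be conformally $\mathbb C$, $\mathbb C^{\ast}$ or a torus — impossible, a finite annulus being covered by the disc. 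So $d\sigma^2$ is incomplete: some divergent curve $\ell$ in $A$ has finite $d\sigma^2$-length, and since it leaves every compact subset it tends to one of the two boundary circles. Using the reflection symmetry of $d\sigma^2$ (which interchanges those circles and preserves $d\sigma^2$-lengths), I may assume $\ell$ tends to $\Gamma$; there $d\sigma^2=d\tau^2$, and — arguing as Fujimoto does — $\ell$ may be taken to avoid small neighbourhoods of the isolated points of $E$, on whose complement $ds\le C\,d\tau$ holds. Hence $\ell$ has finite $ds^2$-length, contradicting the completeness of $ds^2$ toward $\Gamma$. This proves~(\ref{1}), and with it Theorem~\ref{T1}.

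\emph{The main obstacle.} Two points carry the argument. The first is the construction of $d\tau^2$ together with the domination $ds\le C\,d\tau$ off $E$: balancing the exponents so that $d\tau^2$ is simultaneously flat and, thanks to $\sum(1-1/m_j)>4$, dominant over $ds^2$ is precisely the ramified second-main-theorem input of Ru, and the behaviour near the poles of $g$ — where the pointwise comparison breaks down and the divergent curve must be steered around these isolated points without destroying finiteness of its $ds^2$-length — is a delicate technicality already present in Fujimoto's original proof. The second, and the genuinely new feature of the annular-end setting, is that one cannot pass to a simply connected subdomain to run Fujimoto's developing-map argument — shrinking $A$ to a disc destroys the completeness of $ds^2$ — so the whole argument must be carried out on the annulus itself; this is what forces the treatment of the inner circle $\{|z|=1/r\}$, which is at finite $ds^2$-distance and hence invisible to the estimate of the previous step, and the Schwarz-reflection device is what neutralizes it, so that the conformal-type trichotomy for complete flat surfaces (which a finite annulus violates) can deliver the contradiction.
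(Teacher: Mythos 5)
Your overall architecture --- pass to a sub-annular end for the limit-inferior statement, build a flat auxiliary metric, symmetrize/reflect across the annulus so that the inner circle (which is at finite $ds$-distance) is neutralized, and finish with the conformal-type/Liouville contradiction for complete flat metrics --- matches the paper's Steps 1--4 quite closely. The gap is in your central analytic step: the claimed pointwise domination $ds \le C\, d\tau$ with a \emph{fixed} constant $C$ off neighbourhoods of $E$. With the paper's (and Ru's) normalization, $d\tau^2 = |h|^{\frac{2}{1-p}}\bigl(\Pi_j |G_j|^{1-\frac{1}{m_j}-\delta}/|W(g_0,g_1)|\bigr)^{\frac{2p}{1-p}}|dz|^2$ and $ds^2=|h|^2\|g\|^4|dz|^2$, so the ratio is
$$\frac{ds}{d\tau} \;=\; \Bigl(\frac{\|g\|^{\,q-2-\sum_j \frac{1}{m_j}-q\delta}\,|W(g_0,g_1)|}{|h|\,\Pi_j|G_j|^{1-\frac{1}{m_j}-\delta}}\Bigr)^{\frac{p}{1-p}},$$
which contains the uncontrolled factor $1/|h|$ ($h$ is zero-free but its modulus can be arbitrarily small) and a Lemma~\ref{L4}-type quantity whose Schwarz--Ahlfors bound is $C_0\cdot 2R/(R^2-|z|^2)$ --- valid only on a \emph{disk} and blowing up at its boundary, not a uniform constant on the annulus. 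No second-main-theorem input gives a uniform pointwise constant here, so the inequality you rely on to transfer finite $d\sigma$-length into finite $ds$-length is not available.

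The paper's way around this is precisely the point you cannot skip: one first proves that $d\tau$ \emph{is} complete toward $\{|z|=r\}\cup S$, by assuming a finite-$d\tau$-length divergent curve, developing $d\tau$ via a local isometry $\Phi$ onto a maximal Euclidean disk $\Delta_R$ ($R<\infty$), and computing $\Phi^*ds^2$ in the flat coordinate $w$. The isometry relation $|dw/dz|^{1-p}=|h|\Pi_j|G_j|^{(1-\frac{1}{m_j}-\delta)p}/|W|^{p}$ makes the factor $h$ cancel exactly, leaving $\Phi^*ds^2=\bigl(\|f\|^{q-2-\sum_j\frac{1}{m_j}-q\delta}|W(f_0,f_1)|/\Pi_j|F_j|^{1-\frac{1}{m_j}-\delta}\bigr)^{2p}|dw|^2$, to which Lemma~\ref{L4} applies \emph{on the disk} $\Delta_R$; the resulting bound $\bigl(2R/(R^2-|w|^2)\bigr)^{2p}$ is integrable because $p<1$, yielding a finite-$ds$-length divergent curve and the contradiction. (Your curve-steering around $E$ is likewise handled in the paper not by rerouting but by a local order computation showing $\nu_{d\tau}(z_0)\le -\delta p/(1-p)<-1$ at zeros of the Wronskian, so $d\tau$ is already complete there; note also that the small $\delta>0$ in the exponents, which you omit, is exactly what makes that exponent exceed $1$.) Without replacing your pointwise comparison by this development-onto-a-disk argument, the proof does not close.
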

Theorem \ref{T1} gives in particular the following generalization of Theorem B of Kao :
\begin{corollary}\label{C2}
{\it If the Gauss map $g$ on an annular end 
of a non-flat complete minimal surface in $\mathbb R^3$
assumes four values on the unit sphere only finitely often, it takes any other value infinitely often without ramification.}
\end{corollary}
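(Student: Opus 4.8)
We sketch a proof of Theorem~\ref{T1}, from which Corollary~\ref{C2} follows at once. The argument adapts the method of Fujimoto \cite{Fu1,Fu2}, in the form refined by Ru \cite{Ru2} for Theorem~A, to an annular end, in the spirit of Kao \cite{Kao}. Write the induced metric on $A\cong\{1/r<|z|<r\}$ as $ds^{2}=\lambda^{2}|dz|^{2}$; by the Weierstrass representation $ds^{2}=|h|^{2}(1+|g|^{2})^{2}|dz|^{2}$ for a holomorphic $h$ on $A$ with $|h|^{2}(1+|g|^{2})^{2}>0$, where $g$ is the (meromorphic) Gauss map. Since $A$ is an end of the \emph{complete} surface $M$, the metric $ds^{2}$ is complete towards the boundary circle of $A$ corresponding to the end; after the conformal substitution $z\mapsto r^{2}/z$ if necessary, we assume this is $\{|z|=r\}$, so every path in $A$ tending to $\{|z|=r\}$ has infinite $ds$-length, and (since $\lambda\ge c>0$ on every compact sub-annulus) so does every path winding infinitely often around $\{z=0\}$. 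Assume first that each $a^{j}$ has ramification multiplicity at least $m_{j}$ on $A$, and suppose, contrary to \eqref{1}, that $\sum_{j=1}^{q}(1-\tfrac1{m_{j}})>4$; equivalently $q-2-\sum_{j}\tfrac1{m_{j}}>2$, so we may fix $\gamma\in\mathbb R$ with
\[
\frac{2}{\;q-2-\sum_{j=1}^{q}\tfrac1{m_{j}}\;}\;<\;\gamma\;<\;1 .
\]
(The ``Moreover'' part is obtained by running the whole argument with each $m_{j}$ replaced by an arbitrary positive integer below the limit inferior of the orders of the zeros of $a^{j}_{0}g_{1}-a^{j}_{1}g_{0}$ on $A$ — arbitrarily large when that limit inferior is $\infty$, in particular when $g$ takes $a^{j}$ only finitely often, where $1-1/m_{j}=1$: such a lower bound holds outside some compact $\{|z|\le\rho_{0}\}\subset A$, only the behaviour near $\{|z|=r\}$ is used, and one lets those integers increase to the limit inferior.)

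\emph{An auxiliary pseudo-metric of curvature $\le-1$.} Because $\sum_{j}(1-\tfrac1{m_{j}})>2$, the orbifold on $\mathbb P^{1}(\mathbb C)$ with a cone of angle $2\pi/m_{j}$ at each $a^{j}$ is hyperbolic; pulling back its metric of constant curvature $-1$ by $g$ and weighting it by $\gamma$ against the Weierstrass factor $|h|$, one obtains, after Ru \cite{Ru2}, on $A\setminus E$ (with $E:=\bigcup_{j}g^{-1}(a^{j})\cup g^{-1}(\infty)$) a conformal pseudo-metric of the shape
\[
d\tau^{2}\;=\;|h|^{\,2(1-\gamma)}\left(\frac{|g'|}{\prod_{j=1}^{q}|a^{j}_{0}g_{1}-a^{j}_{1}g_{0}|^{\,1-1/m_{j}}}\right)^{\!2\gamma}(1+|g|^{2})^{\,\sigma}|dz|^{2},
\]
the exponent $\sigma$ fixed so the expression is intrinsic. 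The inequality $\gamma>2/(q-2-\sum_{j}1/m_{j})$ is exactly what forces $K_{d\tau^{2}}\le-1$, while $\gamma<1$ keeps the factor $|h|^{2(1-\gamma)}$ present. The ramification hypothesis — each zero of $a^{j}_{0}g_{1}-a^{j}_{1}g_{0}$, hence the corresponding zero of $g'$, has order at least $m_{j}$ — is precisely what calibrates the exponents so that $d\tau^{2}$ extends continuously across $E$ to a pseudo-metric on all of $A$ (vanishing to finite order on $E$), still of curvature $\le-1$ off a discrete set. Finally, since $2(1-\gamma)>0$, a pointwise comparison of conformal factors near the end (the relevant powers of $1+|g|^{2}$ and of $|a^{j}_{0}g_{1}-a^{j}_{1}g_{0}|$ then working in the favourable direction) yields $C>0$ with $ds^{2}\le C\,d\tau^{2}$ on a neighbourhood of $\{|z|=r\}$ and on every compact sub-annulus; hence $d\tau^{2}$ is complete towards $\{|z|=r\}$ and along paths winding infinitely around $\{z=0\}$.

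\emph{The contradiction.} Pass to the universal cover $\pi\colon\widetilde A\to A$; being conformally an annulus of finite modulus, $\widetilde A$ is conformally a disk. The pull-back $\widetilde{d\tau^{2}}=\pi^{*}d\tau^{2}$ still has curvature $\le-1$ off a discrete set and, by the previous step, is complete towards the part of the ideal boundary of $\widetilde A$ lying over $\{|z|=r\}$ and towards the two ends of $\widetilde A$ produced by the winding; on the other hand the remaining ideal boundary lies over the \emph{inner} circle $\{|z|=1/r\}$, across which $M$ — hence $h$, $g$, and so $d\tau^{2}\asymp|dz|^{2}$ — extends, so $\widetilde{d\tau^{2}}$ is at finite distance from that part of the boundary. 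Running Fujimoto's argument \cite{Fu1,Fu2} here (develop the associated flat structure to a holomorphic map of $\widetilde A$ into $\mathbb C$; the completeness just established forces this developing map to be proper after a harmless modification near the inner boundary, whence $\widetilde A$ would be conformally $\mathbb C$) contradicts the fact that $\widetilde A$ is conformally a disk. This proves \eqref{1}, and, with the reduction of the first paragraph, Theorem~\ref{T1}. The step just described — carrying the construction and the concluding argument through on the annulus with completeness available towards only one boundary circle — is the crux; it is exactly the point treated by Kao \cite{Kao} for omitted values, and is what must be extended to ramified values.

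\emph{Proof of Corollary~\ref{C2}.} Suppose $g$ on the annular end assumes four values $a^{1},\dots,a^{4}$ only finitely often, and let $b\in\mathbb P^{1}(\mathbb C)$ be any other value. By the ``Moreover'' part of Theorem~\ref{T1} applied to $a^{1},\dots,a^{4},b$, the quantity attached to each $a^{j}$ equals $1$; if $g$ took $b$ only finitely often, or were ramified over $b$ with multiplicity at least $2$ on the end, the quantity for $b$ would be at least $\tfrac12$, so the left-hand side of \eqref{1} would be at least $4+\tfrac12>4$, a contradiction. Hence $g$ assumes $b$ infinitely often and is not ramified over it.
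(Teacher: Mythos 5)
Your closing paragraph --- deducing Corollary \ref{C2} from the ``Moreover'' clause of Theorem \ref{T1} applied to the five points $a^1,\dots,a^4,b$ --- is correct and is exactly how the paper obtains the corollary (the paper gives no separate argument for it). The difficulty lies in the part you chose to supply yourself, namely the proof of Theorem \ref{T1}: your sketch departs from the paper's route and has two genuine gaps, so the corollary is only established modulo Theorem \ref{T1}.

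First, the pointwise comparison $ds^2\le C\,d\tau^2$ near $\{|z|=r\}$ cannot hold. Your $d\tau^2$ vanishes to finite order on $E=\bigcup_j g^{-1}(a^j)\cup\{g'=0\}$, whereas $ds^2=|h|^2||g||^4|dz|^2$ is nondegenerate everywhere, and $E$ accumulates at $\{|z|=r\}$ in precisely the situations the theorem addresses (values taken infinitely often with ramification). No such pointwise domination is established in Fujimoto, Ru, or the paper. Instead, completeness of the auxiliary metric towards $\{|z|=r\}$ is proved by contradiction: one assumes a divergent curve of finite $d\tau$-length exists, extends a maximal isometric disk $\Delta_R$ of a \emph{flat} metric $d\tau^2$ along it, and bounds the $ds$-length of the image curve by $C_0^{p}\int_0^R\bigl(2R/(R^2-|w|^2)\bigr)^{p}|dw|<\infty$ using Lemma \ref{L4} --- which is where the curvature $\le-1$ pseudo-metric of Lemma \ref{L2} and the generalized Schwarz lemma actually enter --- contradicting completeness of $M$. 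That is the content of Claim \ref{Cl1}, and it cannot be replaced by a direct conformal-factor comparison.

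Second, your concluding step is internally inconsistent and hand-waves at the crux. You build a metric of curvature $\le-1$ but then propose to ``develop the associated flat structure''; a curvature $\le-1$ metric has no developing map into $\mathbb C$. More seriously, the metric you obtain is complete only towards $\{|z|=r\}$ (and the winding directions), not towards $\{|z|=1/r\}$, and you dismiss this with ``a harmless modification near the inner boundary.'' That modification is where the real work lies: the paper's Step 3 symmetrizes the flat metric by multiplying the conformal factor at $z$ with the corresponding factor at $1/z$, producing a flat, nowhere-vanishing metric on $\{1/r<|z|<r\}$ minus a discrete set that is complete towards \emph{both} boundary circles and the discrete set; only then does Lemma \ref{L5} yield a local isometry $\Phi:\mathbb C\to\tilde A_1\subset\{|z|<r\}$, and Liouville's theorem the contradiction. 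Without an explicit substitute for this symmetrization (or for Kao's handling of the inner boundary), your argument does not close.
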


Moreover, we also would like to consider  the Gauss map of complete minimal surfaces $M$ immersed in $\mathbb R^4,$ this case has been investigated by various authors (see, for example Osserman (\cite{Os}), Chen (\cite{Chen}), Fujimoto (\cite{Fu2}) and Kawakami (\cite{K})). In this case, the Gauss map of $M$ may be identified with a pair of meromorphic functions $g=(g^1, g^2)$ which is introduced in $\S 4$ (see also Osserman (\cite{Os}), Fujimoto (\cite{Fu2}) and Kawakami (\cite{K})). We shall prove the following result
which again shows that the restriction of the Gauss map to an annular end cannot have
more branching (and in particular not avoid more values) than on the whole complete minimal surface immersed in $\mathbb R^4$ : 
\begin{theorem}\label{T2}
Suppose that $M$ is a complete non-flat minimal surface in $\mathbb R^4$ and $g=(g^1, g^2)$ is the Gauss map of $M.$ Let A be an annular end of $M$ which is conformal to $\{z  :  0 < 1/r < |z| < r\}$, where $z$
is a conformal coordinate. Let $a^{11}, ... , a^{1q_1}$ respectively $a^{21}, ... , a^{2q_2}$ be $ q_1\ (q_1>2)$ respectively  $q_2\ (q_2>2)$ 
distinct points in $ \mathbb P^1(\mathbb C).$ \\
\indent (i) In the case $g^l\not\equiv constant\  (l=1,2),$ if $g^l$ is ramified over $a^{lj}$ with multiplicity at least $m_{lj}$ for each $j\ (l=1,2)$ on $A,$  then\\
$ \gamma_1=\sum_{j=1}^{q_1}(1 - \frac{1}{m_{1j}})\leq 2,$ or $ \gamma_2=\sum_{j=1}^{q_2}(1 - \frac{1}{m_{2j}})\leq 2,$ or\\
$$\dfrac{1}{\gamma_1 - 2} + \dfrac{1}{\gamma_2 - 2} \geq 1.$$
\indent (ii) In the case where  $g^1$ or $g^2$ is constant, say $g^2\equiv constant, $ if $g^1$  is ramified over $a^{1j}$ with multiplicity at least $m_{1j}$ for each $j,$ we have the following :
$$ \gamma_1=\sum_{j=1}^{q_1}(1 - \frac{1}{m_{1j}})\leq 3.$$
Moreover, the results  still hold if we replace, for all $a^{lj}$ $(j=1,...,q_l; \:l=1,2)$  the $m_{lj}$ by the limit inferior
of the orders of the zeros of the function $ a^{lj}_0g^l_1 - a^{lj}_{1}g^l_{0}$ on $A$ (where $g^l =(g^l_0: g^l_{1})$ are reduced representations and $a^{lj}=(a^{lj}_0:a^{lj}_1)$) and in particular 
by $\infty$ if $g^l$ takes the value $a^{lj}$ only a finite number of times on $A$.
\end{theorem}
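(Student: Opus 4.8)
I propose to adapt Fujimoto's curvature method to the annular-end setting --- in the refined form Ru used for Theorem~A, where one builds an auxiliary pseudo-metric of negative curvature incorporating the ramification data --- together with Kao's device (\cite{Kao}) for transferring such an argument from a disc to an annulus. Throughout I assume, exactly as in Theorem~\ref{T1}, that all the $m_{lj}$ are finite; the general assertion (with the $\liminf$ of the orders of the zeros of $a^{lj}_0g^l_1-a^{lj}_1g^l_0$ on $A$, and $m_{lj}=\infty$ when $g^l$ omits $a^{lj}$ on $A$) then follows from the finite case by the limiting argument indicated there. Recall from \S4 that, writing the Gauss map of $M\subset\mathbb R^4$ as $g=(g^1,g^2)$, the metric induced on the annular end $A\cong\{1/r<|z|<r\}$ has the form $ds^2=|h|^2(1+|g^1|^2)(1+|g^2|^2)|dz|^2$ with $h$ holomorphic and zero-free. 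Since $A$ is an end of the \emph{complete} surface $M$, after possibly replacing $z$ by $r^2/z$ we may and do assume that $ds^2$ is complete toward the circle $\{|z|=r\}$, i.e. every path in $A$ tending to $\{|z|=r\}$ has infinite $ds^2$-length.

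I would treat part (ii) first, as the closest analogue of Theorem~\ref{T1}. If $g^2\equiv$ const then $(1+|g^2|^2)$ is a positive constant and $ds^2=c\,|h|^2(1+|g^1|^2)|dz|^2$; the sole --- but decisive --- difference from the $\mathbb R^3$ situation is that $(1+|g^1|^2)$ here enters to the power $1$ rather than $2$, and this is exactly what replaces the critical constant $4$ by $3$. Assuming for contradiction that $\gamma_1=\sum_j(1-1/m_{1j})>3$, I would, following Fujimoto--Ru, fix suitable real exponents and form an auxiliary conformal pseudo-metric $d\tau^2$ on $A$ --- away from the poles and ramification points of $g^1$ --- built out of $h$, the derivative $(g^1)'$, the linear forms $a^{1j}_0g^1_1-a^{1j}_1g^1_0$ and the factor $(1+|g^1|^2)$, with the exponents tuned so that (a) $d\tau^2$ has Gauss curvature bounded above by a negative constant and carries only the mild (ramification-type) singularities the construction allows, and (b) $d\tau^2$ is comparable to a fixed power of $ds^2$ near $\{|z|=r\}$. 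The existence of such exponents is precisely the hypothesis $\gamma_1>3$. One then feeds $d\tau^2$ into the annular endgame described below to contradict the completeness of $ds^2$ toward $\{|z|=r\}$.

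For part (i), with both $g^l$ non-constant, I would argue by contradiction from $\gamma_1>2$, $\gamma_2>2$ and $\tfrac1{\gamma_1-2}+\tfrac1{\gamma_2-2}<1$. Both factors $(1+|g^l|^2)$ now enter $ds^2$ to the power $1$, and the construction is Fujimoto's for $\mathbb R^4$: one seeks positive reals $p_1,p_2$ and forms
$$ d\tau^2 \;=\; |h|^{2}\,\Omega_1^{\,p_1}\,\Omega_2^{\,p_2}\,|dz|^{2}, $$
where $\Omega_l$ is the pseudo-metric factor attached to $g^l$ and the points $a^{l1},\dots,a^{lq_l}$ with multiplicities $m_{l1},\dots,m_{lq_l}$ --- involving $(g^l)'$, the linear forms $a^{lj}_0g^l_1-a^{lj}_1g^l_0$ and $(1+|g^l|^2)$, normalised so that alone it would carry an $\mathbb R^3$-type auxiliary metric. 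The exponents $p_1,p_2$, together with one further overall normalisation, are chosen so that $d\tau^2$ again satisfies (a) and (b) above; the solvability of the resulting system of inequalities for positive $p_1,p_2$ is exactly the condition $\tfrac1{\gamma_1-2}+\tfrac1{\gamma_2-2}<1$ when $\gamma_1,\gamma_2>2$ (in the remaining cases there is nothing to prove). The same annular endgame then applies.

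The hard part will be that annular endgame: transplanting Fujimoto's disc-based conclusion to the annulus $A=\{1/r<|z|<r\}$, which is complete only at $\{|z|=r\}$. One passes to the universal cover of (a one-sided collar of) $A$ --- a strip, or a half-plane --- where $d\tau^2$ still has curvature bounded above by a negative constant and Fujimoto's lemma on discs of arbitrary radius is available; the delicate point is to ensure that the divergent curve this produces genuinely runs out to the infinity circle $\{|z|=r\}$ of $M$, and not to the interior boundary $\{|z|=1/r\}$, so that its finite $ds^2$-length really contradicts completeness. This is precisely the subtlety Kao resolved for $\mathbb R^3$, by a careful analysis of the developing map of $d\tau^2$ together with the harmonic function $\log|z|$ on $A$; the task here is to verify that nothing in that analysis uses the ambient dimension, so that it goes through verbatim once the correct auxiliary metric $d\tau^2$ --- from part (ii), respectively part (i) --- has been constructed. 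By comparison, the algebraic checks (that the exponents can be chosen to give (a), and that the feasibility condition for (b) is the stated inequality) are routine once the set-up is fixed, following Fujimoto's $\mathbb R^4$ computation in \cite{Fu2}.
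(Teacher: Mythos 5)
Your overall architecture is the one the paper uses: reduce to finite $m_{lj}$ on a sub-annular end, build a Fujimoto--Ru auxiliary metric out of $h$, the Wronskians $W(g^l_0,g^l_1)$ and the linear forms $G^l_j=a^{lj}_0g^l_1-a^{lj}_1g^l_0$ raised to powers $(1-\tfrac{1}{m_{lj}}-\delta)p_l$, and observe that the feasibility of the exponents is exactly the asserted inequality. Your bookkeeping is also right: in case (ii) the single factor $\|g^1\|^2$ in $ds^2=|h|^2\|g^1\|^2\|g^2\|^2|dz|^2$ forces $p=1/(\gamma_1-2-q_1\delta)$ and hence the threshold $3$ instead of $4$, and in case (i) the constraint $p_1+p_2<1$ with $p_l=1/(\gamma_l-2-q_l\delta)$ degenerates, as $\delta\to 0$, precisely to $\tfrac{1}{\gamma_1-2}+\tfrac{1}{\gamma_2-2}<1$.

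There is, however, one point where your plan as written would fail. You ask that $d\tau^2$ itself have Gauss curvature bounded above by a negative constant and then derive a contradiction from its completeness on (the universal cover of) a collar of $A$. But a half-plane, a strip, or an annular end carries complete metrics of curvature $\le -1$ (they are hyperbolic), so conditions (a) and (b) alone produce no contradiction. In the paper the roles are separated: the metric $d\tau^2$ is \emph{flat} (a power of the modulus of a holomorphic function times $|dz|^2$), and negative curvature enters only through an auxiliary comparison pseudo-metric $v$ (Lemma \ref{L2}) to which the generalized Schwarz lemma is applied; this yields the pointwise estimate of Lemma \ref{L4}, which in turn bounds $\Phi^*ds^2$ along the developed maximal disc of the flat metric and shows $d\tau^2$ is complete toward $\{|z|=r\}$ and toward the Wronskian zeros. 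Flatness is then essential for the endgame. Relatedly, the ``delicate point'' you defer to Kao's analysis of the developing map and $\log|z|$ is handled differently here: the paper symmetrizes the flat metric under $z\mapsto 1/z$, obtaining a metric complete at \emph{both} boundary circles and at the discrete singular set, and then applies Fujimoto's uniformization lemma for complete flat metrics (Lemma \ref{L5}) to get a local isometry $\Phi:\mathbb{C}\to\tilde A_2\subset\{|z|<r\}$, contradicting Liouville. So your proposal is salvageable, but only after you disentangle the flat metric from the negatively curved one; as stated, step (a) cannot do the work you assign to it.
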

The main idea to prove the theorems is to  construct a pseudo-metric with negative curvature with ramification on an annular end, which is a refinement of  the ideas in Ru (\cite{Ru2}). After that we use  arguments similar to those used by Kao (\cite{Kao}) and by Fujimoto (\cite{Fu1}, \cite {Fu3}) to finish the proofs. 

\section{Auxiliary  lemmas}
Let $f$ be a nonconstant holomorphic map of a disk $\Delta_R := \{z \in \mathbb C: |z| < R \}$ into $\mathbb P^1(\mathbb C),$ where $0 < R < \infty.$ Take a reduced representation $f = (f_0: f_1)$ on $\Delta_R$ and define
$$||f||:= (|f_0|^2 + |f_1|^2)^{1/2}, W(f_0,f_1) =W_z(f_0, f_1):= f_0f_1' - f_1f_0',$$
where the derivatives are taken with respect to the variable $z$.
Let $a^j\: ( 1\leq j \leq q )$ be $q$ distinct points in $\mathbb P^1(\mathbb C).$ We may assume $a^j=(a^j_0: a^j_1 )$ with $|a^j_0|^2 + |a^j_1|^2 = 1$ $( 1\leq j \leq q ),$ and set
$$F_j:=a^j_0f_1 - a^j_1f_0 \ (1\leq j \leq q).$$
\begin{proposition}(\cite[Proposition 2.1.6 and 2.1.7]{Fu3})\label{W}.\\
a) If $\xi$ is another local coordinate, then 
$W_{\xi}(f_0,f_1)= W_z(f_0,f_1) \cdot (\frac{dz}{d\xi})$.\\
b)  $W(f_0,f_1) \not\equiv 0$ (iff $f$is nonconstant).
\end{proposition}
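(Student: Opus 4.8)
The plan is to obtain both assertions from elementary properties of the Wronskian determinant together with the definition of a reduced representation; neither part uses the geometry of minimal surfaces, which is why the paper is content to cite it from Fujimoto's book, and the proposal below just records the underlying computation.

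For part (a), I would apply the chain rule directly. If $\xi$ is a local coordinate on an open subset of $\Delta_R$ and $z=z(\xi)$ denotes the inverse change of variables, then $\dfrac{df_i}{d\xi}=f_i'(z)\,\dfrac{dz}{d\xi}$ for $i=0,1$, so
\[
W_\xi(f_0,f_1)=f_0\frac{df_1}{d\xi}-f_1\frac{df_0}{d\xi}
=\bigl(f_0f_1'-f_1f_0'\bigr)\frac{dz}{d\xi}
=W_z(f_0,f_1)\cdot\frac{dz}{d\xi}.
\]
Since $dz/d\xi$ is a nowhere-vanishing holomorphic function, this in addition shows that $W_\xi(f_0,f_1)$ and $W_z(f_0,f_1)$ have the same zeros with the same multiplicities, which is the way part (a) is used in the sequel.

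For part (b), the implication ``$f$ constant $\Rightarrow W(f_0,f_1)\equiv 0$'' is a one-line computation: if $f\equiv(a_0:a_1)$, then a reduced representation satisfies $a_1f_0-a_0f_1\equiv 0$, i.e. $f_0$ and $f_1$ are linearly dependent over $\mathbb C$, and substituting $f_1=cf_0$ (or $f_0\equiv 0$ when $a_0=0$) gives $W(f_0,f_1)=f_0f_1'-f_1f_0'\equiv 0$. For the converse, suppose $W(f_0,f_1)\equiv 0$. Because the representation is reduced, $f_0$ and $f_1$ have no common zero, so $f_0$ vanishes only on a discrete subset of $\Delta_R$; on its complement one has $(f_1/f_0)'=W(f_0,f_1)/f_0^2\equiv 0$, hence $f_1/f_0$ is constant on the connected set obtained by deleting a discrete set from $\Delta_R$, so $f=(f_0:f_1)$ is a constant map. (This is exactly the classical Wronskian criterion for linear dependence of two holomorphic functions on a connected domain.)

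The only mild subtlety, and the single place the hypothesis \emph{reduced} actually enters, is the bookkeeping about common zeros: one must know that $f_0$ and $f_1$ never vanish simultaneously, both so that $f_1/f_0$ genuinely represents $f$ as a meromorphic map off a discrete set, and so that linear dependence of $f_0,f_1$ really does pin down a single point of $\mathbb P^1(\mathbb C)$. Everything else is the chain rule and an elementary determinant identity, so there is no real obstacle here beyond this minor point.
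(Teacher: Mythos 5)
Your argument is correct, and it matches the situation in the paper exactly: the authors do not prove this proposition but cite it from Fujimoto's book, and the computation you record (chain rule for the coordinate change in (a), the classical Wronskian criterion $\bigl(f_1/f_0\bigr)'=W(f_0,f_1)/f_0^2$ together with the no-common-zeros property of a reduced representation in (b)) is precisely the standard proof behind that citation. The only hairline point is in the converse of (b): ``reduced'' does not by itself rule out $f_0\equiv 0$, but in that case $f=(0:1)$ is constant anyway, so your conclusion stands.
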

\begin{proposition}(\cite[Proposition 2.1]{Fu2}).
\label{P1}
For each $\epsilon > 0$ there exist positive constants $C_1$ and $\mu$ depending only on $a^1, \cdots, a^q$ and on $\epsilon$ respectively such that
$$\Delta \log \bigg( \dfrac{||f||^\epsilon}{\Pi_{j=1}^q\log(\mu||f||^{2}/|F_j|^2)}\bigg)\geq \dfrac{C_1||f||^{2q-4}|W(f_0,f_1)|^2}{\Pi_{j=1}^q|F_j|^2\log^2(\mu||f||^{2}/|F_j|^2)}$$
\end{proposition}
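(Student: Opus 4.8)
The plan is to reduce the asserted Laplacian estimate to the classical Fubini--Study identity together with an elementary inequality for a product of logarithms, the crucial input being that the $q$ target points are distinct. Write $\Delta=4\,\partial_z\partial_{\bar z}$ (the normalization only rescales $C_1$) and set $u_j:=\log(\mu\|f\|^2/|F_j|^2)$. Since $\|a^j\|=1$ forces $|F_j|\le\|f\|$, one has $\|f\|^2/|F_j|^2\ge 1$, hence $u_j\ge\log\mu>0$ for $\mu>1$. Because $\log\!\big(\|f\|^{\epsilon}/\prod_j u_j\big)=\tfrac{\epsilon}{2}\log\|f\|^2-\sum_j\log u_j$, the computation reduces to evaluating $\Delta\log\|f\|^2$ and each $\Delta\log u_j$.

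First I would record the two pointwise identities. Lagrange's identity $(|f_0|^2+|f_1|^2)(|f_0'|^2+|f_1'|^2)-|\bar f_0f_0'+\bar f_1f_1'|^2=|W|^2$ yields $\Delta\log\|f\|^2=4|W|^2/\|f\|^4$. To handle $u_j$ I would apply the unitary map $U_j\in U(2)$ sending $a^j$ to $(1:0)$; it preserves $\|f\|$ and multiplies $W$ by the unimodular factor $\det U_j$, and in the new frame $F_j$ becomes the first coordinate, so $u_j=\log\big(\mu(1+|h_j|^2)\big)$ with $h_j$ the resulting affine coordinate and $|h_j|^2=\|f\|^2/|F_j|^2-1$. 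A direct computation then gives $\Delta u_j=4|W|^2/\|f\|^4$ and $|\nabla u_j|^2=|h_j|^2\cdot 4|W|^2/\|f\|^4$. Feeding these into $\Delta\log u_j=\Delta u_j/u_j-|\nabla u_j|^2/u_j^2$ and using $\|f\|^2/|F_j|^2=\mu^{-1}e^{u_j}$ collects everything into
\begin{equation*}
\Delta\log\!\Big(\frac{\|f\|^{\epsilon}}{\prod_j u_j}\Big)=\frac{|W|^2}{\|f\|^4}\Big(2\epsilon-4\sum_j\frac1{u_j}-4\sum_j\frac1{u_j^2}+\frac{4}{\mu}\sum_j\frac{e^{u_j}}{u_j^2}\Big).
\end{equation*}

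Next, since $\|f\|^{2q-4}/\prod_j|F_j|^2=\mu^{-q}\|f\|^{-4}e^{\sum_j u_j}$, the desired bound is equivalent to the purely elementary inequality
\begin{equation*}
2\epsilon-4\sum_j\frac1{u_j}-4\sum_j\frac1{u_j^2}+\frac{4}{\mu}\sum_j\frac{e^{u_j}}{u_j^2}\ \ge\ \frac{C_1}{\mu^q}\,\frac{e^{\sum_j u_j}}{\prod_j u_j^2},\qquad u_j\ge\log\mu,
\end{equation*}
which I would prove in two moves. I first fix $\mu$ so large (depending on $q$ and $\epsilon$) that $4\sum_j u_j^{-1}+4\sum_j u_j^{-2}\le\epsilon$, so the left-hand side is at least $\tfrac{4}{\mu}\sum_j e^{u_j}/u_j^2\ge\tfrac{4}{\mu}\,e^{u_1}/u_1^2$, where $u_1:=\max_j u_j$.

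The main obstacle is that the right-hand side contains $e^{\sum_j u_j}=\prod_j e^{u_j}$, which a priori dwarfs any single summand on the left; this is exactly where distinctness of the $a^j$ enters. Since $|F_j|/\|f\|$ is the chordal distance from $f$ to $a^j$ and $\delta:=\min_{j\ne k}|a^j_0a^k_1-a^j_1a^k_0|>0$, the triangle inequality for the chordal metric shows that at most one index $k$ can satisfy $|F_k|/\|f\|<\delta/2$, so every other $j$ obeys $u_j\le\log\mu+\log(4/\delta^2)=:C_2$. Ordering $u_1\ge\cdots\ge u_q$, the factors $e^{u_2},\dots,e^{u_q}$ are then bounded by $e^{C_2}$, whence $e^{\sum_j u_j}\le e^{(q-1)C_2}e^{u_1}$ and $\prod_j u_j^2\ge u_1^2(\log\mu)^{2(q-1)}$; the right-hand side is at most $\frac{C_1e^{(q-1)C_2}}{\mu^q(\log\mu)^{2(q-1)}}\cdot\frac{e^{u_1}}{u_1^2}$, which is dominated by $\frac{4}{\mu}\,e^{u_1}/u_1^2$ once $C_1\le 4\mu^{q-1}(\log\mu)^{2(q-1)}e^{-(q-1)C_2}$. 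As $\mu$ and $C_2$ depend only on $\epsilon$ and on $a^1,\dots,a^q$, so does $C_1$, which closes the argument. Finally I would observe that all identities hold off the discrete set $\{W=0\}\cup\bigcup_j\{F_j=0\}$, and that near a zero of $F_j$ the only additional contribution to the left-hand side comes from the term $-\log u_j\sim-\log\log(1/|F_j|)$, which is subharmonic there; hence the singular masses are non-negative and the inequality persists in the sense of distributions.
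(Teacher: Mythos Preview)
The paper does not give its own proof of this proposition; it is simply quoted from Fujimoto~\cite{Fu2}. Your argument is therefore not a comparison against a proof in the paper but a self-contained reconstruction of Fujimoto's computation, and it is essentially correct.

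The reduction is clean: the Fubini--Study identity $\Delta\log\|f\|^{2}=4|W|^{2}/\|f\|^{4}$ and the formulas $\Delta u_j=4|W|^{2}/\|f\|^{4}$, $|\nabla u_j|^{2}=4|h_j|^{2}|W|^{2}/\|f\|^{4}$ (obtained after the unitary rotation sending $a^j$ to $(1:0)$) are right, and your displayed expression for $\Delta\log\bigl(\|f\|^{\epsilon}/\prod_j u_j\bigr)$ is exactly what one gets after substituting $|h_j|^{2}=\mu^{-1}e^{u_j}-1$. The passage to the purely elementary inequality in the $u_j$'s is correct as well.

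The decisive step---where distinctness of the $a^j$ enters---is handled correctly. Your ``triangle inequality'' can be made precise by the linear identity $a^k_{0}F_j-a^j_{0}F_k=-(a^j_{1}a^k_{0}-a^k_{1}a^j_{0})f_0$ and its companion with $f_1$, which give $|a^j_{0}a^k_{1}-a^j_{1}a^k_{0}|^{2}\,\|f\|^{2}\le 2(|F_j|^{2}+|F_k|^{2})$; hence at most one index can have $|F_j|/\|f\|$ below a fixed multiple of $\delta$, and all other $u_j$ are bounded by your $C_2$. After that, your choice $C_1\le 4\mu^{q-1}(\log\mu)^{2(q-1)}e^{-(q-1)C_2}$ closes the estimate; note that $e^{C_2}=4\mu/\delta^{2}$, so this simplifies to $C_1\le 4(\log\mu)^{2(q-1)}(\delta^{2}/4)^{q-1}$, making the dependence of $C_1$ on $a^1,\dots,a^q$ (through $\delta$) and of $\mu$ on $\epsilon,q$ transparent.

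One minor remark on your last sentence: near a zero of $F_j$ the right-hand side behaves like $|z-z_0|^{-2}\bigl(\log|z-z_0|\bigr)^{-2}$ (because $W$ vanishes there to order $\nu_{F_j}-1$), which is locally integrable; and $-\log u_j\sim-\log\log(1/|z-z_0|)$ is subharmonic with no point mass in its Laplacian, so the distributional extension is indeed valid. That said, the proposition as stated (and as used in the paper, in the proof of Lemma~\ref{L2}) is only needed pointwise off $\{W=0\}\cup\bigcup_j\{F_j=0\}$, so this refinement is not required.
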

\begin{lemma}\label{L2}
Suppose that $q-2 -\sum_{j=1}^q\frac{1}{m_j} > 0$ and that $f$ is ramified over $a^j$ with multiplicity at least $m_j$ for each $j$ $ (1\leq j \leq q).$  Then there exist positive constants $C$ and $\mu ( > 1)$ depending only on $a^j$ and $m_j$ $ (1\leq j \leq q)$ which satisfy the following : If we set 
$$v := \dfrac{C||f||^{q-2 -\sum_{j=1}^q\frac{1}{m_j}}|W(f_0,f_1)|}{\Pi_{j=1}^q|F_j|^{1-\frac{1}{m_j}}\log(\mu||f||^{2}/|F_j|^2)}$$
on $\Delta_R \setminus \{F_1\cdot ...\cdot F_q = 0\}$ and $v=0$ on $\Delta_R \cap \{F_1 \cdot ... \cdot F_q = 0\},$ then $v$ is continuous on $\Delta_R$ and satisfies the condition $$\Delta\log v \geq v^2$$ in the sense of distribution.
\end{lemma}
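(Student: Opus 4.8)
The plan is to derive the differential inequality $\Delta \log v \geq v^2$ from Proposition \ref{P1} by a careful bookkeeping of exponents, and to check continuity of $v$ across the zero set of $F_1\cdots F_q$ using the ramification hypothesis. First I would reduce the problem on $\Delta_R \setminus \{F_1\cdots F_q = 0\}$, where everything is smooth: there $\log v = \log C + (q-2-\sum_j\frac{1}{m_j})\log\|f\| + \log|W(f_0,f_1)| - \sum_j (1-\frac{1}{m_j})\log|F_j| - \sum_j \log\log(\mu\|f\|^2/|F_j|^2)$. Since $\log|W(f_0,f_1)|$ and $\log|F_j|$ are (locally, away from their zeros) harmonic, applying $\Delta$ kills those two middle groups of terms, and one is left with
$$\Delta \log v = \Big(q-2-\sum_{j=1}^q\tfrac{1}{m_j}\Big)\Delta\log\|f\| - \sum_{j=1}^q \Delta\log\log(\mu\|f\|^2/|F_j|^2).$$
The first term is a nonnegative multiple of $\Delta\log\|f\|\geq 0$ (the Fubini–Study pullback form is nonnegative), so it can be discarded, and I would compare what remains against the right-hand side of Proposition \ref{P1} with the exponent $\epsilon$ chosen appropriately (essentially $\epsilon = q-2-\sum_j\frac{1}{m_j}$, up to adjusting constants): Proposition \ref{P1} gives exactly a lower bound of the form $C_1\|f\|^{2q-4}|W|^2 \big/ \big(\prod_j |F_j|^2 \log^2(\cdots)\big)$ for a related Laplacian, and one checks that this quantity, after redistributing powers, dominates $v^2 = C^2\|f\|^{2(q-2-\sum 1/m_j)}|W|^2 \big/ \big(\prod_j |F_j|^{2(1-1/m_j)}\log^2(\cdots)\big)$ once $C$ is taken small enough and $\mu$ large enough depending only on the $a^j$ and $m_j$. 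The exponent matching works because $2q-4 - \sum_j 2 = 2q - 4 - 2q = -4$ while the target has $\|f\|$-exponent $2(q-2)-2\sum 1/m_j$ and $|F_j|$-exponent $-2(1-1/m_j)$, and the difference is absorbed by the already-bounded factors $|F_j|/\|f\| \leq 1$.

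Next I would handle the continuity and the distributional meaning near a zero $z_0$ of some $F_j$. By the ramification hypothesis, $F_j$ vanishes at $z_0$ to order $\geq m_j$, so $|F_j|^{1-1/m_j}$ vanishes to order $\geq m_j - 1$; meanwhile $W(f_0,f_1) = f_0 f_1' - f_1 f_0'$, and a direct computation (writing $f_1 = (a_0^j/a_1^j)f_0 - F_j/a_1^j$ near $z_0$ if $a_1^j \neq 0$, or symmetrically) shows $W(f_0,f_1)$ vanishes to order $\geq m_j - 1$ at $z_0$ as well — this is the standard fact that a ramification point of order $m_j$ for the map contributes order $m_j-1$ to the Wronskian. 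Hence the quotient $|W|/\prod_j|F_j|^{1-1/m_j}$ stays bounded near $z_0$, the logarithmic factor only helps (it tends to $+\infty$ in the denominator since $|F_j|\to 0$, but $\log$ of that is still fine), and so $v$ extends continuously by $0$. The distributional inequality $\Delta\log v \geq v^2$ then follows from the smooth inequality on the complement together with the fact that $\log v \to -\infty$ along the zero set, so that $\Delta \log v$ acquires only nonnegative (Dirac-type) contributions there — concretely, one writes $v = e^{h}\cdot|W|/\prod|F_j|^{1-1/m_j}$ with $h$ smooth and notes $\Delta \log v = \Delta h + \sum (\text{orders})\cdot(\text{point masses}) \geq \Delta h$, which is exactly the smooth inequality already established.

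I expect the main obstacle to be the exponent bookkeeping in matching Proposition \ref{P1} to the desired inequality — in particular, choosing $\epsilon$, $C$ and $\mu$ so that the $\|f\|^{\epsilon}$ normalization in Fujimoto's proposition converts cleanly into the $\|f\|^{q-2-\sum 1/m_j}$ weight here, while simultaneously keeping all constants dependent only on the $a^j$ and $m_j$ (not on $f$). A secondary technical point is justifying that the term $(q-2-\sum 1/m_j)\Delta\log\|f\|$, though harmless, is genuinely nonnegative and that discarding it is legitimate in the distributional setting; this uses the hypothesis $q-2-\sum 1/m_j > 0$ in an essential way (both for the sign and for the exponent to make sense as a positive power). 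Once these are in place, the argument is a routine adaptation of Fujimoto's construction of a negatively curved pseudo-metric, now carrying the ramification exponents $1/m_j$.
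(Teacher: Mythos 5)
Your overall strategy coincides with the paper's: continuity of $v$ at the zeros of $F_1\cdots F_q$ via an order count combining the Wronskian identity with the ramification hypothesis, and the inequality $\Delta\log v\geq v^2$ via Proposition \ref{P1} with $\epsilon=q-2-\sum_j\frac{1}{m_j}$ followed by the redistribution $|F_j|\leq\|f\|$; your exponent bookkeeping is exactly right. However, one step would fail as written: you propose to \emph{discard} the term $\bigl(q-2-\sum_j\frac{1}{m_j}\bigr)\Delta\log\|f\|$ because it is nonnegative, and then to bound what remains, namely $-\sum_j\Delta\log\log(\mu\|f\|^2/|F_j|^2)$, from below by the right-hand side of Proposition \ref{P1}. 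Dropping a nonnegative term only weakens your lower bound, and the weakened inequality is not what Proposition \ref{P1} asserts and is false in general: writing $u_j=\log(\mu\|f\|^2/|F_j|^2)$ one has $-\Delta\log u_j=|\nabla u_j|^2/u_j^2-\Delta u_j/u_j$ with $\Delta u_j=2\Delta\log\|f\|\geq 0$, so the sum you keep carries negative contributions proportional to $\Delta\log\|f\|$, and it is precisely the factor $\|f\|^{\epsilon}$ in Fujimoto's proposition (through the choice of $\mu$ large, roughly so that $\sum_j 2/\log\mu\leq\epsilon$) that absorbs them. The correct move, which the paper makes, is the opposite: keep the term, observe that away from the zero set $\log|W|$ and $\log|F_j|$ are harmonic so that $\Delta\log v$ equals $\Delta\log\bigl(\|f\|^{\epsilon}/\prod_j\log(\mu\|f\|^2/|F_j|^2)\bigr)$ there, and apply Proposition \ref{P1} to that full expression. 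Since you already identified the right $\epsilon$, constant choices, and redistribution, the repair is immediate, but as stated the step is backwards.

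A smaller point: in the continuity argument you infer boundedness of $|W|/\prod_j|F_j|^{1-1/m_j}$ from ``both numerator and denominator vanish to order at least $m_j-1$,'' which is not a valid deduction, since a lower bound on the denominator's vanishing order works against you. What is needed is the coupled statement: if $\nu_{F_j}(z_0)=n\geq m_j$, then $\nu_W(z_0)=n-1$ while the denominator has order $n(1-\frac{1}{m_j})=n-\frac{n}{m_j}\leq n-1$ precisely because $n\geq m_j$; this is the computation $(*)$ in the paper, and together with the logarithmic factor tending to $+\infty$ it gives $v\to 0$.
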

\begin{proof}
First, we prove the continuousness of $v.$\\
Obviously, $v$ is continuous on $\Delta_R \setminus \{F_1\cdot ...\cdot F_q = 0\}.$\\
Take a point $z_0$ with $F_i(z_0) = 0$ for some $i.$ Then $F_j(z_0) \not= 0$ for all $j\not= i$ and $\nu_{F_i}(z_0) \geq m_i.$ Changing indices if necessary, we may assume that $f_0(z_0) \not= 0,$ then $a^i_0 \not= 0.$ Hence, we get 
$$\nu_W(z_0) = \nu_{ \dfrac{( a^i_0\frac{f_1}{f_0}-a^i_1 )'}{ a^i_0}}(z_0) =\nu_{ \dfrac{(F_i/f_0)'}{ a^i_0}}(z_0) = \nu_{F_i}(z_0) - 1.$$
Thus, 
\begin{align*}
\nu_{v\Pi_{j=1}^q\log(\mu||f||^{2}/|F_j|^2)}(z_0) &= \nu_W(z_0) - \sum_{j=1}^q(1-\frac{1}{m_j})\nu_{F_j}(z_0)\\  
& = \nu_{F_i}(z_0) - 1 - (1 - \frac{1}{m_i})\nu_{F_i}(z_0)\\
& = \dfrac{\nu_{F_i}(z_0)}{m_i} - 1 \geq 0. \ (*)
\end{align*}
So, $\lim_{z\rightarrow z_0}v(z) = 0.$ This implies that $v$ is continuous on $\Delta_R.$\\
Now, we choose constants $C$ and $\mu$ such that $C^2$ and $\mu$ satisfy the inequality in Proposition \ref{P1} for the case $\epsilon = q-2 -\sum_{j=1}^q\frac{1}{m_j}.$ Then we have (by using  that $|F_j| \leq ||f||$ $ ( 1\leq j \leq q )$) :
\begin{align*}
\Delta\log v &\geq \Delta\log\dfrac{||f||^{q-2 -\sum_{j=1}^q\frac{1}{m_j}}}{\Pi_{j=1}^q\log(\mu||f||^{2}/|F_j|^2)} \\ 
& \geq C^2\dfrac{||f||^{2q-4}|W(f_0,f_1)|^2}{\Pi_{j=1}^q|F_j|^2\log^2(\mu||f||^{2}/|F_j|^2)}\\
& \geq C^2\dfrac{||f||^{2q-4-2\sum_{j=1}^q\frac{1}{m_j}}|W(f_0,f_1)|^2}{\Pi_{j=1}^q|F_j|^{2- \frac{2}{m_j}}\log^2(\mu||f||^{2}/|F_j|^2)}\\
&=v^2 .
\end{align*}
Thus Lemma \ref{L2} is proved.
\end{proof}
\begin{lemma}(Generalized Schwarz's Lemma (\cite{Ah})). \label{L3}
Let $v$ be a nonnegative real-valued continuous subharmonic function on $\Delta_R.$ If $v$ satisfies the inequality  $\Delta\log v \geq v^2$ in the sense of distribution, then
$$v(z) \leq \dfrac{2R}{R^2 - |z|^2}\, .$$
\end{lemma}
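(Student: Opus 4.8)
This is the classical Ahlfors--Schwarz lemma, and the natural route is a comparison with the Poincar\'e metric followed by a limiting argument. First I would fix $\rho$ with $0<\rho<R$ and set, on $\Delta_\rho$,
\[
w_\rho(z):=\frac{2\rho}{\rho^2-|z|^2},
\]
which is smooth and positive and satisfies $\Delta\log w_\rho=w_\rho^2$ identically (a direct computation; $w_\rho$ is the conformal factor of the hyperbolic metric of curvature $-1$ on $\Delta_\rho$). It suffices to show $v\le w_\rho$ on $\Delta_\rho$ for every such $\rho$: letting $\rho\nearrow R$ then yields $v(z)\le \frac{2R}{R^2-|z|^2}$ for each fixed $z$ with $|z|<R$. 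If $v\equiv0$ there is nothing to prove, so assume $v\not\equiv0$.

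Next I would study the quotient $h:=v/w_\rho$ on $\Delta_\rho$. Since $v$ is continuous on $\Delta_R\supset\overline{\Delta_\rho}$, it is bounded on $\overline{\Delta_\rho}$, whereas $w_\rho(z)\to+\infty$ as $|z|\to\rho$; hence $h$ extends continuously to $\overline{\Delta_\rho}$ with $h=0$ on $\partial\Delta_\rho$, and $M:=\max_{\overline{\Delta_\rho}}h$ is attained at some \emph{interior} point $z_0\in\Delta_\rho$. The goal is to show $M\le1$. If not, then $v(z_0)=Mw_\rho(z_0)>0$, so $v>0$ on a neighbourhood $U$ of $z_0$; there $\log v$ is a genuine continuous subharmonic function and, by hypothesis,
\[
\Delta\bigl(\log v-\log w_\rho\bigr)\ \ge\ v^2-w_\rho^2\ =\ w_\rho^2\bigl(h^2-1\bigr)
\]
in the sense of distributions on $U$. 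By continuity of $h$, after shrinking $U$ we have $h^2-1\ge\tfrac12(M^2-1)>0$ and $w_\rho^2\ge c_0>0$ on $U$, so $\Delta(\log v-\log w_\rho)\ge c>0$ there, with $c:=\tfrac12 c_0(M^2-1)$.

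Finally I would derive a contradiction from the maximum principle. The function $\log v-\log w_\rho$ is continuous on $U$, attains an interior maximum at $z_0$ (with value $\log M$), and has distributional Laplacian bounded below by $c>0$; subtracting the paraboloid $\tfrac{c}{4}|z-z_0|^2$ (whose Laplacian equals $c$) produces a continuous function on $U$ with nonnegative distributional Laplacian — hence subharmonic — that nevertheless has a strict local maximum at $z_0$, which is impossible. Therefore $M\le1$, i.e. $v\le w_\rho$ on $\Delta_\rho$, and letting $\rho\to R$ finishes the proof. The one genuinely delicate point is converting the distributional inequality $\Delta\log v\ge v^2$ into a usable statement at the extremal point $z_0$: the remedy is to restrict to the open set $\{v>0\}$, where $\log v$ is a bona fide subharmonic function rather than $-\infty$ on part of the set, and then to absorb the positive lower bound on the Laplacian with a small paraboloid so that the elementary maximum principle for subharmonic functions applies.
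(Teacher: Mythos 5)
Your proof is correct. Note that the paper itself gives no proof of this lemma at all --- it is quoted directly from Ahlfors' 1938 paper --- so there is no in-text argument to compare against; what you have written is the standard Ahlfors--Schwarz comparison argument, and it is sound. The computation $\Delta\log w_\rho=w_\rho^2$ is right (with $\Delta=4\partial_z\partial_{\bar z}$ one gets $\Delta\log(\rho^2-|z|^2)=-4\rho^2/(\rho^2-|z|^2)^2$), the quotient $h=v/w_\rho$ does extend by $0$ to $\partial\Delta_\rho$ because $v$ is bounded on $\overline{\Delta_\rho}\subset\Delta_R$ while $w_\rho\to\infty$, and the two delicate points are both handled properly: you work on the open set $\{v>0\}$, where $\log v$ is a genuine continuous function and the distributional inequality is meaningful, and you convert the strict positive lower bound on the distributional Laplacian into a contradiction with the sub-mean-value property by subtracting the paraboloid $\tfrac{c}{4}|z-z_0|^2$ (whose Laplacian is exactly $c$ since $\Delta|z|^2=4$), which turns the interior maximum into a strict one for a continuous function with nonnegative distributional Laplacian, i.e.\ a subharmonic function. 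The final limit $\rho\nearrow R$ is immediate. This is exactly the kind of self-contained argument the authors chose to outsource to \cite{Ah}.
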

\begin{lemma} \label{L4}
 For every $\delta$ with $q-2 -\sum_{j=1}^q\frac{1}{m_j} >  q\delta > 0 $ and $f$ which is ramified over $a^j$ with multiplicity at least $m_j$ for each $j$ $ (1\leq j \leq q),$ there exists a positive constant $C_0$ such that
$$\dfrac{||f||^{q-2 -\sum_{j=1}^q\frac{1}{m_j} -  q\delta}|W(f_0,f_1)|}{\Pi_{j=1}^q|F_j|^{1-\frac{1}{m_j}-\delta}}\leq C_0\dfrac{2R}{R^2 - |z|^2}\, .$$
\end{lemma}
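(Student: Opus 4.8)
The plan is to deduce Lemma \ref{L4} from Lemmas \ref{L2} and \ref{L3}, the only additional input being an elementary one-variable estimate that absorbs the logarithmic factors into powers of $|F_j|/\|f\|$.

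First, since the hypothesis $q-2-\sum_{j=1}^q\frac1{m_j}>q\delta>0$ in particular gives $q-2-\sum_{j=1}^q\frac1{m_j}>0$, Lemma \ref{L2} applies and furnishes constants $C>0$ and $\mu>1$ together with a continuous function $v$ on $\Delta_R$ satisfying $\Delta\log v\ge v^2$ in the sense of distributions. In particular $\Delta\log v\ge 0$, so $\log v$ is subharmonic and hence $v=e^{\log v}$ is a nonnegative continuous subharmonic function on $\Delta_R$; the Generalized Schwarz Lemma (Lemma \ref{L3}) then yields $v(z)\le \dfrac{2R}{R^2-|z|^2}$ on $\Delta_R$.

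Next I would compare the left-hand side of Lemma \ref{L4} with $v$. On $\Delta_R\setminus\{F_1\cdots F_q=0\}$ a direct cancellation of the common factors $\|f\|^{q-2-\sum 1/m_j}$, $|W(f_0,f_1)|$ and $\prod_{j}|F_j|^{1-1/m_j}$ shows
$$\frac{\|f\|^{q-2-\sum_{j=1}^q 1/m_j-q\delta}|W(f_0,f_1)|}{\prod_{j=1}^q|F_j|^{1-\frac1{m_j}-\delta}}=\frac{v}{C}\prod_{j=1}^q\Big[\Big(\tfrac{|F_j|}{\|f\|}\Big)^{\delta}\log\big(\mu\|f\|^2/|F_j|^2\big)\Big].$$
Since $|F_j|\le\|f\|$, each bracketed factor equals $h(t_j)$ with $t_j:=|F_j|^2/\|f\|^2\in(0,1]$ and $h(s):=s^{\delta/2}\log(\mu/s)$. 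Because $\mu>1$ we have $\log(\mu/s)\ge\log\mu>0$ on $(0,1]$, and since $s^{\delta/2}\log(1/s)\to0$ as $s\to0^+$ the function $h$ extends continuously to $[0,1]$ with $h(0)=0$; hence $C_1:=\sup_{0<s\le1}h(s)<\infty$. Therefore the product above is $\le C_1^q$, and combining this with the Schwarz bound on $v$ gives
$$\frac{\|f\|^{q-2-\sum_{j=1}^q1/m_j-q\delta}|W(f_0,f_1)|}{\prod_{j=1}^q|F_j|^{1-\frac1{m_j}-\delta}}\le\frac{C_1^q}{C}\,v\le\frac{C_1^q}{C}\cdot\frac{2R}{R^2-|z|^2}$$
on $\Delta_R\setminus\{F_1\cdots F_q=0\}$, so $C_0:=C_1^q/C$ is the desired constant there.

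Finally I would observe that, exactly as in the computation $(*)$ inside the proof of Lemma \ref{L2}, the left-hand side extends continuously by $0$ across $\{F_1\cdots F_q=0\}$: at a zero $z_0$ of some $F_i$ one has $\nu_W(z_0)=\nu_{F_i}(z_0)-1$ and $\nu_{F_i}(z_0)\ge m_i$, so the numerator vanishes to order at least $m_i\delta>0$ more than the denominator (while $\|f\|$ stays bounded away from $0$), forcing the quotient to tend to $0$. Since the right-hand side is also continuous on $\Delta_R$, the inequality extends from $\Delta_R\setminus\{F_1\cdots F_q=0\}$ to all of $\Delta_R$, which finishes the proof. I do not expect a genuine obstacle here: the substance is already contained in Lemmas \ref{L2} and \ref{L3}, and the only points requiring a little care are the boundedness of the auxiliary function $h$ on $(0,1]$ and the continuous extension of the quotient across the divisor $\{F_1\cdots F_q=0\}$.
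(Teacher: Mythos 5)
Your proof is correct and follows essentially the same route as the paper: apply Lemma \ref{L2} and the generalized Schwarz lemma (Lemma \ref{L3}) to bound $v$, absorb the logarithms via the boundedness of $s\mapsto s^{\delta/2}\log(\mu/s)$ on $(0,1]$ (the paper's $\overline{C}:=\sup_{0<x\le 1}x^{\delta}\log(\mu/x^{2})$, the same function under $s=x^{2}$), and handle the divisor $\{F_1\cdots F_q=0\}$ by the order computation $(*)$ from Lemma \ref{L2}. The only additions are cosmetic: you make explicit that $v$ is subharmonic before invoking Lemma \ref{L3}, and you quantify the vanishing order $\delta m_i$ across the divisor.
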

\begin{proof}
By using an argument as in (*) of the proof of Lemma \ref{L2}, the above inequality is correct on $\{F_1\cdot ... \cdot F_q = 0\}$ for every $C_0 > 0$ (the left hand side of the above inequality is zero).\\
If $z \not\in \{F_1\cdot ...\cdot F_q = 0\}, $ using Lemma \ref{L2} and Lemma \ref{L3}, we get
$$\dfrac{C||f||^{q-2 -\sum_{j=1}^q\frac{1}{m_j}}|W(f_0,f_1)|}{\Pi_{j=1}^q|F_j|^{1-\frac{1}{m_j}}\log(\mu||f||^{2}/|F_j|^2)} \leq \dfrac{2R}{R^2 - |z|^2}\, ,$$
where $C$ and $\mu$ are the constants given in Lemma \ref{L2}.\\
On the other hand, for a given $\delta > 0, $ it holds that
$$\lim_{x \rightarrow 0}x^{\delta}\log (\mu/x^2) < + \infty \, ,  $$
so we can set 
$$\overline{C}:= \sup_{0<x\leq 1}x^{\delta}\log (\mu/x^2) ( < + \infty )\, . $$
Then we have  
\begin{align*}
&\dfrac{||f||^{q-2 -\sum_{j=1}^q\frac{1}{m_j} -  q\delta}|W(f_0,f_1)|}{\Pi_{j=1}^q|F_j|^{1-\frac{1}{m_j}-\delta}}\\ 
&= \dfrac{||f||^{q-2 -\sum_{j=1}^q\frac{1}{m_j}}|W(f_0,f_1)|}{\Pi_{j=1}^q|F_j|^{1-\frac{1}{m_j}}}\prod_{j=1}^q\bigg(\dfrac{|F_j|}{||f||}\bigg)^{\delta}\\
&=\dfrac{||f||^{q-2 -\sum_{j=1}^q\frac{1}{m_j}}|W(f_0,f_1)|}{\Pi_{j=1}^q|F_j|^{1-\frac{1}{m_j}}\log(\mu||f||^{2}/|F_j|^2)}\prod_{j=1}^q(\dfrac{|F_j|}{||f||})^{\delta}\log(\mu||f||^{2}/|F_j|^2)\\ 
&\leq \dfrac{\overline{C}^q||f||^{q-2 -\sum_{j=1}^q\frac{1}{m_j}}|W(f_0,f_1)|}{\Pi_{j=1}^q|F_j|^{1-\frac{1}{m_j}}\log(\mu||f||^{2}/|F_j|^2)}\\ 
&\leq \dfrac{\overline{C}^q}{C}\dfrac{2R}{R^2 - |z|^2}\, .
\end{align*}
This proves Lemma \ref{L4}.
\end{proof}
We finally will need the following result on completeness of open Riemann surfaces with conformally flat metrics due to Fujimoto :
\begin{lemma}(\cite[Lemma 1.6.7]{Fu3}). \label{L5}
Let $d\sigma^2$ be a conformal flat metric on an open Riemann surface $M$. Then for every point $p \in M$, there is a holomorphic and locally biholomorphic map $\Phi$ of
a disk (possibly with radius $\infty$)  $\Delta_{R_0} := \{w : |w|<R_0 \}$ $(0<R_0 \leq \infty )$ onto an open neighborhood of $p$ with $\Phi (0) = p$ such that $\Phi$ is a local isometry, namely the pull-back 
$\Phi^*(d\sigma^2)$ is equal to the standard (flat) metric on $\Delta_{R_0}$, and for some point $a_0$ with $|a_0|=1$, the $\Phi$-image of the curve 
$$L_{a_0} : w:= a_0 \cdot s \; (0 \leq s < R_0)$$
is divergent in $M$ (i.e. for any compact set $K \subset M$, there exists an $s_0<R_0$
such that the $\Phi$-image of the curve $L_{a_0} : w:= a_0 \cdot s \; (s_0 \leq s < R_0)$
does not intersect $K$).
\end{lemma}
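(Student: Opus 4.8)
The plan is to construct $\Phi$ as the (inverse) developing map of the flat metric, obtained by analytic continuation out of $p$, and then to read off the divergent ray from the maximality of its domain.

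\textbf{Step 1 (local developing chart).} First I would fix a holomorphic coordinate $z$ near $p$ with $z(p)=0$ and write $d\sigma^2=\lambda^2|dz|^2$, $\lambda>0$. Since $d\sigma^2$ is flat, its curvature $-\lambda^{-2}\Delta\log\lambda$ vanishes, so $\log\lambda$ is harmonic and on a simply connected coordinate neighborhood one may write $\log\lambda=\mathrm{Re}\,h$ with $h$ holomorphic. The function
$$\psi(z):=\int_0^z e^{h(\zeta)}\,d\zeta$$
has nowhere-vanishing derivative $e^{h}$, hence is locally biholomorphic, and $|\psi'|^2|dz|^2=e^{2\,\mathrm{Re}\,h}|dz|^2=d\sigma^2$. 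Thus $\psi$ is a local isometry onto a neighborhood of $0$ in the $w$-plane carrying the flat metric $|dw|^2$, and its inverse $\Phi_0=\psi^{-1}$ is a holomorphic local isometry of a small disk $\Delta_\rho$ onto a neighborhood of $p$ with $\Phi_0(0)=p$. The global obstruction is that $\log\lambda$ need not be the real part of a single-valued holomorphic function on all of $M$; this is exactly why one develops along paths rather than writing a closed formula.

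\textbf{Step 2 (maximal continuation along rays).} I would call a holomorphic locally biholomorphic $\Phi:\Delta_r\to M$ with $\Phi(0)=p$, $\Phi^*d\sigma^2=|dw|^2$ and the same $1$-jet at $0$ as $\Phi_0$ a developing map centered at $p$. Because a local isometry of a flat metric is determined by its $1$-jet at one point, any two such maps agree on the overlap of their domains; hence the analytic continuation of $\Phi_0$ along the segment $[0,w]$ assigns, whenever it exists, a single value $\Phi(w)$. For each direction $a_0=e^{i\theta}$ I set $R(\theta)\in(0,\infty]$ to be the supremum of radii to which the continuation along $L_{a_0}$ persists, and put $R_0:=\inf_\theta R(\theta)$. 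For $|w|<R_0$ the continuation along $[0,w]$ exists, so $\Phi$ is a well-defined holomorphic local isometry on $\Delta_{R_0}$ with $\Phi(0)=p$; note that each ray image $s\mapsto\Phi(a_0 s)$ is a unit-speed geodesic of $d\sigma^2$.

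\textbf{Step 3 (producing the divergent ray).} The crux is that a \emph{finite} continuation radius forces divergence. If $R(\theta)<\infty$ and the geodesic $\gamma(s)=\Phi(a_0 s)$ stayed inside a compact $K\subset M$ along a sequence $s_n\uparrow R(\theta)$, then, since the local charts of Step 1 have radii bounded below uniformly on the compact set $K$, re-developing in a fixed chart around an accumulation point $q=\lim\gamma(s_n)$ and gluing by uniqueness would extend $\Phi$ past $R(\theta)$ — contradicting maximality. So $R(\theta)<\infty$ implies $L_{a_0}$ is divergent. If $R_0<\infty$, then, using lower semicontinuity of $\theta\mapsto R(\theta)$ (whence the infimum over the compact circle is attained), there is $\theta_*$ with $R(\theta_*)=R_0<\infty$, and its ray, lying in $\Delta_{R_0}$, is divergent. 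If instead $R_0=\infty$, then $(M,d\sigma^2)$ is complete and $\Phi:\mathbb C\to M$ is a local isometry of the complete flat plane, hence a covering of the non-compact open surface $M$; a complete non-compact surface carries a ray from $p$, which leaves every compact set, and its lift through $\Phi$ is a straight ray $w=a_0 s$ whose image is divergent, with $R_0=\infty$. Either way the required $\Phi:\Delta_{R_0}\to M$ and $a_0$ are produced.

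The hard part will be Step 3: establishing that a finite continuation radius forces the ray to be divergent. This rests on the uniform lower bound for the sizes of the local developing charts over a compact set, so that an accumulating geodesic can always be re-developed and glued in, and — in the complete case $R_0=\infty$ — on passing from ``$\Phi$ is a local isometry of the complete plane'' to ``$\Phi$ is a covering map,'' together with the existence of a ray in a non-compact complete surface.
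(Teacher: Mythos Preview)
The paper does not give its own proof of this lemma; it is simply quoted from Fujimoto's monograph \cite[Lemma~1.6.7]{Fu3}, so there is no in-paper argument to compare against. Your approach---constructing the inverse developing map of the flat metric by writing $\log\lambda$ locally as the real part of a holomorphic function, continuing it maximally, and reading off the divergent ray from the obstruction to further extension---is exactly the standard route (and is essentially what Fujimoto does). The argument you outline is correct in its overall structure: the local chart in Step~1 is the right object, the uniqueness of local isometries with a prescribed $1$-jet gives single-valued analytic continuation on the simply connected disk, and the contrapositive in Step~3 (an accumulating ray permits re-gluing, contradicting maximality of $R(\theta)$) is the key mechanism.

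Two places deserve care when you write it up. First, you genuinely need the infimum $R_0=\inf_\theta R(\theta)$ to be \emph{attained}: if $R(\theta_*)>R_0$ then the image of $L_{a_0}$ for $0\le s<R_0$ is a precompact geodesic arc, not divergent. Your lower semicontinuity remark handles this, but state explicitly why $\{\theta:R(\theta)>r\}$ is open (the continuation persists on an open tube around the radial segment). Second, in the case $R_0=\infty$ you invoke that a local isometry from a complete manifold is a covering and that a complete non-compact surface carries a geodesic ray from $p$; both are standard but should be cited, and you should note that ``open Riemann surface'' already means non-compact, so the ray exists. With those details filled in, your proof is sound and matches the intended argument.
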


\section{The proof of  Theorem \ref{T1}}
\begin{proof}
\indent For convenience of the reader, we first recall some notations on the Gauss map of minimal surfaces in $\mathbb R^3.$
\indent Let $x =( x_1, x_2, x_3) : M \rightarrow \mathbb R^3$ be a non-flat complete minimal surface and $g: M \rightarrow  \mathbb P^1(\mathbb C)$ its Gauss map.  Let $z$ be a local holomorphic coordinate. Set $\phi_i := \partial x_i / \partial z \ (i = 1, 2, 3 )$ and $\phi:= \phi_1-\sqrt{-1}\phi_2.$ Then, the (classical) Gauss map $g: M \rightarrow \mathbb P^1(\mathbb C)$ is given by  $$g=\dfrac{\phi_3}{\phi_1 - \sqrt{-1}\phi_2},$$
and the metric on $M$ induced from $\mathbb R^3$ is given by
$$ds^2= |\phi|^2(1 + |g|^2)^2|dz|^2  \text{ (see Fujimoto (\cite{Fu3}))}.$$
We remark that although the $\phi_i$, $(i=1,2,3)$ and $\phi$ depend on $z$, $g$ and $ds^2$ do not.
Next we take a reduced representation $g = (g_0 : g_1)$ on $M$ and set $||g|| = (|g_0|^2 +|g_1|^2)^{1/2}.$ Then we can rewrite 
\begin{equation}
ds^2 = |h|^2||g||^4|dz|^2\,, 
\end{equation}
where $h:= \phi/g_0^2.$ In particular, $h$ is a holomorphic map without zeros. We remark that  $h$ depends on $z$, however, the reduced representation $g=(g_0:g_1)$ is globally defined on $M$ and independent of $z$.
Finally we observe that by the assumption that $M$ is not flat,  $g$ is not constant.

Now the proof of Theorem \ref{T1} will be given in four steps :

{\bf Step 1:}  We will fix notations on the annular end $A \subset M$. Moreover, by passing  to  a sub-annular end of $A \subset M$ we simplify the geometry of the theorem.

Let $A \subset M$ be an annular end of $M,$ that is, $A = \{z  :  0 < 1/r < |z| < r < \infty  \},$ where $z$ is a (global) conformal coordinate of $A$. Since $M$ is complete with respect to $ds^2$, we may assume that the restriction of
 $ds^2$ to $A$ is complete on the set  $\{ z : |z| = r\}$, i.e., the set $\{ z : |z| = r\}$ is at infinite distance from any  point  of $A$. 
 
Let $a^j\:( 1\leq j \leq q )$ be $q >4$ distinct points in $\mathbb P^1(\mathbb C).$ We may assume $a^j=(a^j_0: a^j_1 )$ with $|a^j_0|^2 + |a^j_1|^2 = 1$ $( 1\leq j \leq q ),$ and we set
$G_j:=a^j_0g_1 - a^j_1g_0 \ (1\leq j \leq q)$ for the reduced representation $g = (g_0 : g_1)$ of the Gauss map. By the identity theorem, the $G_j$ have at most countable many
zeros. Let $m_j$ be the  limit inferior
of the orders of the zeros of the functions $G_j$ on $A$ (and in particular 
 $m_j = \infty$ if $G_j$ has only a finite number of zeros on $A$). 
 
 All the $m_j$ are increasing if we only consider the zeros which the functions $G_j$ take on a subset $B \subset A$. So without loss of generality we may prove our theorem only
 on a sub-annular end, i.e. a subset $ A_t :=\{z  :  0 < t \leq |z| < r < \infty  \} \subset A$ 
 with some $t$ such that $1/r < t<r$. (We trivially observe that for $c:=tr>1$, $s:= r/\sqrt{c}$, $\xi := z/ \sqrt{c}$, we have $A_t = \{\xi  :  0 < 1/s \leq |\xi| < s < \infty  \}$.) 
 
 By passing to such a sub-annular end we will be able to extend the construction of a metric in step 2 below to the set $\{ z : |z|=1/r \}$, and, moreover, we may assume that for all $j=1,...,q$ :
 $$
 g\: {\rm omits}\: a^j\: (m_j=\infty)\: {\rm or} \:{\rm takes}\: a^j \:{\rm infinitely}\: {\rm often}
 \:{\rm with}\:{\rm ramification } 
$$
 \begin{equation} \label{ass1}
 2 \leq m_j< \infty \:{\rm and}\: {\rm is}\: 
 {\rm ramified}\: {\rm over}\: a^j\: 
 {\rm with}\: {\rm multiplicity}\: {\rm at}\: {\rm least}\: m_j.
 \end{equation}
 
 {\bf Step 2:} On the annular end $A = \{z  :  0 < 1/r \leq |z| < r < \infty  \}$ minus a discrete
 subset $S \subset A$ we construct a flat metric $d\tau^2$ on $A \setminus S$  which
 is complete on the set  $\{ z : |z| = r\} \cup S$, i.e., the set $\{ z : |z| = r\} \cup S $ is at infinite distance from any  point  of $A\setminus S$.
 
 We may assume that 
 \begin{equation} \label{ass2} \sum_{j=1}^q(1 - \frac{1}{m_j}) > 4,
\end{equation}  
since otherwise Theorem \ref{T1} is already proved.

Take $\delta$ with
$$ \dfrac{q -4-\sum_{j=1}^q\frac{1}{m_j}}{q} > \delta > \dfrac{q -4-\sum_{j=1}^q\frac{1}{m_j}}{q +2}, $$
and set $p = 2/ (q -2-\sum_{j=1}^q\frac{1}{m_j}-q\delta).$ Then 
\begin{equation} \label{3.4.1}0 < p < 1 , \ \frac{p}{1-p} > \frac{\delta p}{1-p} > 1 \ . 
\end{equation}
 Consider the  subset
$$ A_1 = A \setminus \{ z : W_z(g_0, g_1)(z) = 0 \}$$
of $A$. We define a new metric
$$d\tau^2 = |h|^{\frac{2}{1-p}}\bigg(\dfrac{\Pi_{j=1}^q|G_j|^{1-\frac{1}{m_j}-\delta}}{|W(g_0,g_1)|}\bigg)^{\frac{2p}{1-p}}|dz|^2 \ $$
on $A_1$ (where again $G_j := a^j_0g_1 - a^j_1g_0$ and $h$ is defined with respect
to the coordinate $z$ on $A_1 \subset A$ and $W(g_0,g_1) = W_z(g_0,g_1)$) : \\
First we observe that $d\tau$ is continuous and nowhere vanishing on $A_1.$ Indeed, $h$ is without zeros on $A$ and for each $z_0 \in A_1$ with $G_j(z_0) \not= 0$ for all $j=1,...,q$,  $d\tau$ is continuous at $z_0.$\\
Now, suppose there exists a point $z_0\in A_1$ with $G_j(z_0) = 0$ for some $j.$ Then $G_i(z_0) \not= 0$ for all $i\not= j$ and $\nu_{G_j}(z_0) \geq m_j \geq 2$. Changing the indices if necessary, we may assume that $g_0(z_0) \not= 0$, so also $a^j_0 \not= 0.$ So, we get 
\begin{equation} \label{order}
\nu_{W(g_0,g_1)}(z_0) = \nu_{ \dfrac{( a^j_0\frac{g_1}{g_0}-a^j_1 )'}{ a^j_0}}(z_0) =\nu_{ \dfrac{(G_j/g_0)'}{ a^j_0}}(z_0) = \nu_{G_j}(z_0) - 1 > 0.
\end{equation}
This is in contradition with $z_0 \in A_1$. Thus, $d\tau$ is continuous and nowhere vanishing on $A_1.$ \\
Next, it is easy to see that $d\tau$ is flat.\\
By Proposition \ref{W} a) and the dependence of $h$ on $z$ and the independence of the $G_j$ of $z$, we also easily see that $d\tau$ is independent of the choice of the coordinate $z$.

The key point is to prove the following claim :
\begin{claim} \label{Cl1}
 $d\tau$ is complete on the set  $\{ z : |z| = r\}\cup \{z : W(g_0, g_1)(z) = 0\},$ i.e., the set $\{ z : |z| = r\}\cup \{z : W(g_0, g_1)(z) = 0 \}$ is at infinite distance from any interior point in $A_1.$ 
\end{claim}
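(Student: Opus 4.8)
The plan is to argue by contradiction: suppose there is a divergent curve $\gamma : [0,1) \to A_1$ which has finite length with respect to $d\tau$ and whose limit set lies in $\{|z|=r\} \cup \{W(g_0,g_1)=0\}$. First I would observe that $\gamma$ cannot tend to a point $z_0$ with $W(g_0,g_1)(z_0)=0$: near such a point, by \eqref{order} the zero of $W$ is of order $\nu_{G_j}(z_0)-1$ while the numerator $\Pi|G_j|^{1-1/m_j-\delta}$ vanishes only to order $(1-\tfrac{1}{m_j}-\delta)\nu_{G_j}(z_0)$, and since $p/(1-p)>1$ and $\delta p/(1-p)>1$ by \eqref{3.4.1}, the exponent of $|z-z_0|$ in $d\tau$ works out to be $\le -1$, so $d\tau$ is a complete metric near $z_0$ (the integral of $|z-z_0|^{-1}$ diverges). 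Hence the only possibility is that $\gamma$ diverges to $\{|z|=r\}$, and in fact we may assume it stays in a fixed sub-annulus $\{1/r < \rho_0 \le |z| < r\}$ avoiding the (discrete) zero set of $W$.

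The core of the argument is then a change of variables that turns $d\tau$ on a neighbourhood of $\{|z|=r\}$ into a metric to which Lemma \ref{L4} applies, forcing completeness toward $|z|=r$ to fail unless $\gamma$ has infinite $d\tau$-length. Concretely, I would write $d\tau = |h|^{\frac{1}{1-p}}\bigl(\Pi_j |G_j|^{1-\frac{1}{m_j}-\delta}/|W(g_0,g_1)|\bigr)^{\frac{p}{1-p}}|dz|$ and compare it with the induced metric $ds = |h|\,||g||^2|dz|$. Using that $g = (g_0:g_1)$ is ramified over $a^j$ with multiplicity at least $m_j$ on $A$, Lemma \ref{L4} (applied on disks $\Delta_R$ exhausting $A$ near the boundary circle, with the $q$ points $a^1,\dots,a^q$, which is legitimate because $q - 2 - \sum 1/m_j > q\delta > 0$ follows from \eqref{ass2} and the choice of $\delta$) gives a bound
$$
\frac{||g||^{q-2-\sum 1/m_j - q\delta}\,|W(g_0,g_1)|}{\Pi_j |G_j|^{1-\frac{1}{m_j}-\delta}} \le C_0 \frac{2R}{R^2-|z|^2}.
$$
Raising this to the power $\frac{p}{1-p}$ and recalling $p = 2/(q-2-\sum 1/m_j - q\delta)$, so that $\frac{p}{1-p}\bigl(q-2-\sum 1/m_j - q\delta\bigr) = \frac{2p}{1-p}\cdot\frac{1}{?}$ — the exponents are chosen precisely so that the $||g||$-power produced matches $||g||^2$ — I obtain
$$
d\tau \ge \text{const}\cdot |h|\,||g||^2\Bigl(\frac{R^2-|z|^2}{2R}\Bigr)^{\frac{p}{1-p}}|dz| = \text{const}\cdot\Bigl(\frac{R^2-|z|^2}{2R}\Bigr)^{\frac{p}{1-p}} ds.
$$
Since $ds$ is complete toward $|z|=r$ by the arrangement in Step 1, and the correction factor $\bigl((R^2-|z|^2)/2R\bigr)^{p/(1-p)}$ stays bounded below by a positive constant on any compact sub-annulus $\{|z|\le \rho\}$ once $R$ is taken slightly larger than $r$, a curve tending to $|z|=r$ with finite $d\tau$-length would have finite $ds$-length, a contradiction. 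This yields the claim.

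The main obstacle I anticipate is bookkeeping rather than conceptual: getting the exponents of $||g||$, $|W|$, $\Pi|G_j|$ and the $\log$ factors to line up exactly, and handling the passage between Lemma \ref{L4}, which is stated on a disk $\Delta_R$, and the annular end. For the latter I would, for a point near $|z|=r$, work on a Euclidean disk contained in $A$ and tangent internally to $\{|z|=r\}$ (so of radius $\to 0$), or alternatively pull back by the exponential map to a half-plane model; either way the factor $2R/(R^2-|z|^2)$ is replaced by the reciprocal of the Euclidean distance to $\{|z|=r\}$, which is $ds$-complete when integrated. A secondary point to check carefully is that the discrete set $S$ in the statement of Step 2 is exactly $\{W(g_0,g_1)=0\}$ and that $d\tau$ is genuinely complete at those points, which is the first paragraph above; once that and the $|z|=r$ direction are done, completeness of $d\tau$ on $\{|z|=r\}\cup S$ follows.
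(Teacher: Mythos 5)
Your first paragraph (completeness at the zeros of $W(g_0,g_1)$) is correct and coincides with the paper's own argument: the order computation from \eqref{order} gives $\nu_{d\tau}(z_0)\le -2\delta p/(1-p)$ resp.\ $\le -p/(1-p)$, both $<-1$ by \eqref{3.4.1}, so $d\tau$ dominates $C|z-z_0|^{-\delta p/(1-p)}|dz|$ near $z_0$ and any curve ending there has infinite length.

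The second half, however, has a genuine gap, located exactly where you put a question mark. First, the exponents do not line up: since $(q-2-\sum_{j}1/m_j-q\delta)\cdot\frac{p}{1-p}=\frac{2}{1-p}$, raising the Lemma \ref{L4} bound to the power $\frac{p}{1-p}$ produces $\|g\|^{2/(1-p)}$ against the prefactor $|h|^{1/(1-p)}$, not $\|g\|^{2}$ against $|h|$; what you actually obtain is $d\tau\ge \mathrm{const}\cdot ds\cdot\bigl(|h|\,\|g\|^{2}\cdot\frac{R^2-|z|^2}{2R}\bigr)^{p/(1-p)}$, and the leftover factor $(|h|\,\|g\|^{2})^{p/(1-p)}$ need not be bounded below near $\{|z|=r\}$. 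Second, and more fundamentally, Lemma \ref{L4} lives on a disk, and there is no disk in the $z$-plane, contained in the domain of $g$, that covers a neighborhood of the outer circle; your substitute (disks of radius $R'=r-|z_1|\to 0$ touching $\{|z|=r\}$) yields at the center only $\Pi_j|G_j|^{1-1/m_j-\delta}/|W|\gtrsim \|g\|^{q-2-\sum 1/m_j-q\delta}\,(r-|z_1|)$, so the correction factor in $d\tau\ge c(z)\,ds$ degenerates like $(r-|z|)^{p/(1-p)}\to 0$. Completeness of $ds$ does not survive multiplication by a factor tending to $0$ (take $|h|\,\|g\|^{2}=\bigl((r-|z|)\,|\log(r-|z|)|\bigr)^{-1}$: this is complete toward $|z|=r$, but $(r-|z|)^{\epsilon}$ times it is not), and H\"older does not rescue the implication because $\int(r-|z|)^{-1}|dz|=\infty$ along such curves. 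The paper's proof runs the comparison in the opposite direction: assuming a finite-$d\tau$-length curve $\gamma$ to $\{|z|=r\}$ exists, it uses flatness of $d\tau$ to build a local isometry $\Phi$ of a Euclidean disk $\Delta_R$ with $R\le|\gamma|<\infty$ into $A_1$, computes $\Phi^*ds^2=\bigl(\|f\|^{q-2-\sum 1/m_j-q\delta}|W(f_0,f_1)|/\Pi_j|F_j|^{1-1/m_j-\delta}\bigr)^{2p}|dw|^2$ for $f=g\circ\Phi$, and applies Lemma \ref{L4} to $f$ on $\Delta_R$ (legitimate there) to bound $\Phi^*ds$ \emph{above} by $C_0^{p}\,(2R/(R^2-|w|^2))^{p}|dw|$, which is integrable over a radius since $p<1$; this produces a divergent curve in $M$ of finite $ds$-length, contradicting completeness. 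You would need this change of variables (or an equivalent device) to close the argument at the outer boundary.
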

 If $W(g_0, g_1)(z_0) = 0,$ then we have two cases :\\
{\it Case 1.} $G_j(z_0) = 0$ for some $j \in \{1, 2, ..., q \}.$\\
Then we have $G_i(z_0) \not= 0$ for all $i\not= j$ and $\nu_{G_j}(z_0) \geq m_j.$ By the same argument as in (\ref{order}) we get that
$$\nu_{W(g_0,g_1)}(z_0) =  \nu_{G_j}(z_0) - 1.$$
Thus (since $m_j \geq 2$), 
\begin{align*}
\nu_{d\tau}(z_0) &= \dfrac{p}{1-p}((1-\frac{1}{m_j}-\delta)\nu_{G_j}(z_0)-\nu_{W(g_0,g_1)}(z_0))\\  
& = \dfrac{p}{1-p}(1-(\frac{1}{m_j}+\delta)\nu_{G_j}(z_0))
\leq  \dfrac{p}{1-p}(1-(\frac{1}{m_j}+\delta)m_j)\\
&\leq -\dfrac{2\delta p}{1-p}.
\end{align*}
{\it Case 2.} $G_j(z_0) \not= 0$ for all $ 1\leq j \leq q.$ \\
It is easily to see that  $\nu_{d\tau}(z_0) \leq -\dfrac{p}{1-p}.$\\
So, since $0<\delta<1$, we can find a positive constant $C$ such that 
$$|d\tau| \geq  \dfrac{C}{ |z-z_0|^{\delta p/(1-p)}}|dz| $$ 
in a neighborhood of $z_0$. Combining with (\ref{3.4.1}) we thus have that $d\tau$ is complete on $\{z : W(g_0,g_1)(z) = 0\}.$ \\
\indent Now assume that $d\tau$ is not complete on $\{z: |z| = r \}.$ Then there exists $\gamma: [0, 1) \rightarrow A_1,$ where $\gamma (1) \in \{z : |z| = r \},$ so that $|\gamma| < \infty.$ Furthermore, we may also assume that $dist(\gamma (0); \{z : |z| = 1/r\}) > 2|\gamma|.$ Consider a small disk $\Delta$ with center at $\gamma (0).$ Since $d\tau$ is flat, $\Delta$ is isometric to an ordinary disk in the plane (cf. e.g.  Lemma \ref{L5}). Let $\Phi: \{|w| < \eta \}\rightarrow \Delta$ be this isometry. Extend $\Phi,$ as a local isometry into $A_1,$ to the largest disk $\{|w| < R\} = \Delta_R$ possible. Then $R \leq |\gamma|.$ The reason that $\Phi$ cannot be extended to a larger disk is that the image goes to the outside boundary $\{z : |z| = r\}$ of $A_1$ (it cannot go to points of $A$ with $W(g_0,g_1)=0$ since we have shown already the completeness of $A_1$ with respect 
to these points). More precisely, there exists a point $w_0$ with $|w_0| =R$ so that $\Phi(\overline{0,w_0}) = \Gamma_0$ is a divergent curve on $A.$\\
The map $\Phi(w)$ is locally biholomorphic, and the metric on $\Delta_R$ induced from $ds^2$ through $\Phi$ is given by
\begin{equation} \label{3.3} \Phi^*ds^2 =  |h \circ \Phi|^2||g \circ \Phi||^4|\frac{dz}{dw}|^2|dw|^2 \ .
\end{equation}
On the other hand, $\Phi$ is isometric, so we have
$$ |dw| = |d\tau|= \bigg(\dfrac{|h| \Pi_{j=1}^q|G_j|^{(1-\frac{1}{m_j}-\delta)p}}{|W(g_0,g_1)|^p}\bigg)^{\frac{1}{1-p}}|dz|$$
$$\Rightarrow |\dfrac{dw}{dz}|^{1-p} = \dfrac{|h| \Pi_{j=1}^q|G_j|^{(1-\frac{1}{m_j}-\delta)p}}{|W(g_0,g_1)|^p}.$$
Set $f:= g(\Phi), f_0 := g_0(\Phi), f_1 := g_1(\Phi)$ and $F_j:= G_j(\Phi).$  Since
$$W_w(f_0, f_1) = (W_z(g_0, g_1) \circ \Phi)\frac{dz}{dw}, $$
we obtain
\begin{equation} \label{3.4} |\dfrac{dz}{dw}| = \dfrac{|W(f_0,f_1)|^p}{|h(\Phi)| \Pi_{j=1}^q|F_j|^{(1-\frac{1}{m_j}-\delta)p}}
\end{equation}
By (\ref{3.3}) and (\ref{3.4}) and by definition of $p$, therefore, we get
\begin{align*}
\Phi^*ds^2& = \bigg( \dfrac{||f||^2|W(f_0,f_1)|^p}{ \Pi_{j=1}^q|F_j|^{(1-\frac{1}{m_j}-\delta)p}}\bigg)^2|dw|^2\\
&= \bigg( \dfrac{||f||^{q-2-\sum_{j=1}^q\frac{1}{m_j}-q\delta}|W(f_0,f_1)|}{ \Pi_{j=1}^q|F_j|^{1-\frac{1}{m_j}-\delta}}\bigg)^{2p}|dw|^2.
\end{align*}
Using the Lemma \ref{L4}, we obtain
$$\Phi^*ds^2 \leqslant C^{2p}_0.(\dfrac{2R}{R^2 -|w|^2})^{2p}|dw|^2.$$
 Since $0 < p < 1,$ it then follows that 
$$d_{\Gamma_0} \leqslant \int_{\Gamma_0}ds = \int_{\overline{0,w_0}}\Phi^*ds \leqslant C^p_0. \int_0^R(\dfrac{2R}{R^2 -|w|^2})^{p}|dw|  < + \infty, $$
where $d_{\Gamma_0}$ denotes the length of the divergent curve $\Gamma_0$ in $M,$ contradicting the assumption of completeness of $M.$ Claim \ref{Cl1} is proved.\\
To summarize, in step 2 we have constructed, for $A = \{z  :  0 < 1/r \leq |z| < r < \infty  \}$
and $S = \{ z : W_z(g_0, g_1)(z) = 0 \}$, a continuous and nowhere vanishing metric 
$d\tau^2$ on $A \setminus S$ which is flat, independent of the choice of coordinate $z$,
and complete with respect to the points of $S$ and with respect to the (outside) boundary
$\{ z : |z| = r\}$.

{\bf Step 3:} We will "symmetrize" the metric constructed in step 2 so that it will become
a complete and flat metric on $Int(A) \setminus (S \cup \tilde{S})$ (with $\tilde{S}$ another discrete subset).

We introduce a new coordinate $\xi (z) :=1/z$ .
By Proposition \ref{W}~a) we have $S= \{ z : W_z(g_0, g_1)(z) = 0 \} = 
\{ z : W_{\xi}(g_0, g_1)(z) = 0 \} $ (where the zeros are taken with the same multiplicities) and since $d\tau^2$ is independent of the coordinate $z$, the change of coordinate $\xi (z) = 1/z$ yields an isometry of $A \setminus S$
onto the set $\tilde{A} \setminus \tilde{S}$, where $\tilde{A}:=\{z : 1/r < |z| \leq r \}$ 
and $\tilde{S}:= \{ z : W_z(g_0, g_1)(1/z) = 0 \}$.
In particular we have (if still $\tilde{h}$ is defined with respect to the coordinate $\xi$) :
$$d\tau^2 = |\tilde{h}(1/z)|^{\frac{2}{1-p}}\bigg(\dfrac{\Pi_{j=1}^q|G_j(1/z)|^{1-\frac{1}{m_j}-\delta}}{|W_{(1/z)}(g_0,g_1)(1/z)|}\bigg)^{\frac{2p}{1-p}}|d(1/z)|^2 $$
 $$= \bigg( |h(1/z)|^{\frac{2}{1-p}}\bigg(\dfrac{\Pi_{j=1}^q|G_j(1/z)|^{1-\frac{1}{m_j}-\delta}}{|W_z(g_0,g_1)(1/z)|}\bigg)^{\frac{2p}{1-p}} |\frac{dz}{d(1/z)}|^2 \bigg) |d(1/z)|^2$$
 $$= |h(1/z)|^{\frac{2}{1-p}}\bigg(\dfrac{\Pi_{j=1}^q|G_j(1/z)|^{1-\frac{1}{m_j}-\delta}}{|W_z(g_0,g_1)(1/z)|}\bigg)^{\frac{2p}{1-p}} |dz|^2$$

\indent We now define 
\begin{align*}
d\tilde{\tau}^2&= \bigg(|h(z)h(1/z)| \cdot \dfrac{\Pi_{j=1}^q|G_j(z)G_j(1/z)|^{(1-\frac{1}{m_j}-\delta)p}}{|W_z(g_0,g_1)(z) \cdot W_z(g_0,g_1)(1/z)|^p}\bigg)^{\frac{2}{1-p}}|dz|^2\\
&=\lambda^2(z)|dz|^2, 
\end{align*}
on $\tilde{A}_1 := \{z : 1/r < |z| < r \} \setminus \{ z : W_z(g_0, g_1)(z)\cdot W_z(g_0, g_1)(1/z) = 0 \} $. Then $d\tilde{\tau}^2$ is complete  on $\tilde{A}_1$ : In fact by what we showed
above we have:  Towards any point of the boundary 
$\partial \tilde{A}_1 := \{z : 1/r = |z| \} \cup \{z :  |z| = r \} \cup \{ z : W_z(g_0, g_1)(z)\cdot W_z(g_0, g_1)(1/z) = 0 \} $ of $\tilde{A}_1$, one of the factors of $\lambda^2(z)$ is bounded
from below away from zero, and 
the
other factor is the one of a complete metric with respect of this part of the boundary.
Moreover by  the corresponding properties of the two factors of $\lambda^2(z)$ it is trivial that $d\tilde{\tau}^2$ is a continuous nowhere vanishing and flat metric  on $\tilde{A}_1$.

{\bf Step 4 :} We produce a contradiction by using Lemma \ref{L5} to the open Riemann surface $(\tilde{A}_1, d\tilde{\tau}^2)$ : \\
In fact, we apply Lemma \ref{L5} to any point $p \in \tilde{A}_1$. Since $d\tilde{\tau}^2$ is 
complete, there cannot exist a divergent curve from $p$ to the boundary $\partial \tilde{A}_1$
with finite length with respect to $d\tilde{\tau}^2$. Since $\Phi : \Delta_{R_0} \rightarrow \tilde{A}_1$ is a local
isometry, we necessarily have $R_0 = \infty$. So $\Phi : {\mathbb C} \rightarrow \tilde{A}_1 \subset \{z : |z| <r\}$ is a non constant holomorphic map, which contradicts to
Liouville's theorem. So our assumption (\ref{ass2}) was wrong.
This proves the Theorem~\ref{T1}. 
\end{proof}

\section{The proof of  Theorem \ref{T2} }
\begin{proof}
 For convenience of the reader, we first recall some notations on the Gauss map of minimal surfaces in $\mathbb R^4$.
Let $x=(x_1, x_2, x_3, x_4) : M \rightarrow \mathbb R^4$ be a non-flat complete minimal surface in $\mathbb R^4.$ As  is well-known, the set of all oriented 2-planes in $\mathbb R^4$ is canonically identified with the quadric  
$$Q_2(\mathbb C):= \{(w_1:...: w_4) | w^2_1 +  ... + w^2_4 = 0\}$$
in $\mathbb P^3(\mathbb C).$ By definition, the Gauss map $g : M \rightarrow Q_2(\mathbb C)$ is the map which maps each point $p$ of $M$ to the point of $Q_2(\mathbb C)$ corresponding to the oriented tangent
plane of $M$ at $p.$ The quadric $Q_2(\mathbb C)$ is biholomorphic to $\mathbb P^1(\mathbb C)\times\mathbb P^1(\mathbb C) .$ By suitable identifications we may regard $g$ as a pair of meromorphic functions $g=(g^1, g^2)$ on $M.$  Let $z$ be a local holomorphic coordinate. 
Set $\phi_i := \partial x_i/dz$ for $i=1,...,4.$ Then, $g^1$ and $g^2$ are given by 
$$g^1 = \dfrac{\phi_3 + \sqrt{-1}\phi_4}{\phi_1 - \sqrt{-1}\phi_2},\  g^2 = \dfrac{-\phi_3 + \sqrt{-1}\phi_4}{\phi_1 - \sqrt{-1}\phi_2}$$
and the metric on $M$ induced from $\mathbb R^4$ is given by
$$ds^2 =|\phi|^2(1 +|g^1|^2)(1+|g^2|^2)|dz|^2 ,$$
where $\phi:= \phi_1 - \sqrt{-1}\phi_2.$
We remark that although the $\phi_i$, $(i=1,2,3,4)$ and $\phi$ depend on $z$, $g=(g^1,g^2)$ and $ds^2$ do not.
Next we take reduced representations $g ^l= (g^l_0 : g^l_1)$ on $M$ and set $||g^l|| = (|g^l_0|^2 +|g^l_1|^2)^{1/2}$ for $l=1,2.$ Then we can rewrite 
\begin{equation}ds^2 = |h|^2||g^1||^2||g^2||^2|dz|^2 \,,
\end{equation}
 where $h:= \phi/(g^1_0g^2_0)$.
 In particular, $h$ is a holomorphic map without zeros. We remark that  $h$ depends on $z$, however, the reduced representations $g^l=(g^l_0:g^l_1)$ are globally defined on $M$ and independent of $z$.
Finally we observe that by the assumption that $M$ is not flat,  $g$ is not constant.

Now the proof of Theorem \ref{T2} will be given in four steps :

{\bf Step 1:} This step is completely analogue to step 1 in the proof of Theorem \ref{T1}.
We get : By passing to a sub-annular end we may assume that the annular end is 
 $A = \{z  :  0 < 1/r \leq  |z| < r < \infty  \},$ where $z$ is a (global) conformal coordinate of $A$, that the restriction of
 $ds^2$ to $A$ is complete on the set  $\{ z : |z| = r\}$, i.e., the set $\{ z : |z| = r\}$ is at infinite distance from any  point  of $A$, 
  and, moreover, that for all $j=1,...,q_l$, $l=1,2$ (case (i)) respectively for all $j=1,...,q_1$,
  $l=1$
  (case (ii)), we have :
$$
 g^l\: {\rm omits}\: a^{lj}\: (m_{lj}=\infty)\: {\rm or} \:{\rm takes}\: a^{lj} \:{\rm infinitely}\: {\rm often}
 \:{\rm with}\:{\rm ramification}\: \:
$$
 \begin{equation} \label{ass1'}
2 \leq m_{lj}< \infty
 \:{\rm and}\: {\rm is}\: 
 {\rm ramified}\: {\rm over}\: a^{lj}\: 
 {\rm with}\: {\rm multiplicity}\: {\rm at}\: {\rm least}\: m_{lj}.
  \end{equation}
 
 From now on we separate the two cases (i) and (ii), dealing first with the case (i).
  
 {\bf Step 2 for the case (i):} Our strategy is the same as for step 2 in the proof of
 Theorem \ref{T1}. 
We may assume that  $ \gamma_1=\sum_{j=1}^{q_1}(1 - \frac{1}{m_{1j}})> 2$,  $\gamma_2 =\sum_{j=1}^{q_2}(1 - \frac{1}{m_{2j}})> 2,$ and
\begin{equation}\label{ass4}
\dfrac{1}{\gamma_1 - 2} + \dfrac{1}{\gamma_2 - 2} < 1\,,
\end{equation}
since otherwise case (i) of Theorem \ref{T2} is already proved.

Choose  $\delta_0 (> 0)$ such that $\gamma_l - 2 - q_l\delta_0 > 0$ for all $l=1,2,$ and
$$\dfrac{1}{\gamma_1 - 2 - q_1\delta_0} + \dfrac{1}{\gamma_2 - 2- q_2\delta_0} = 1.$$
If we choose a positive constant $\delta (< \delta_0)$ sufficiently near to $\delta_0$ and set $$p_l := 1/(\gamma_l - 2 - q_l\delta),(l=1, 2),$$ we have
\begin{equation} \label{4.2}
0 < p_1 + p_2 < 1, \ \dfrac{\delta p_l}{1-p_1-p_2} > 1 \:( l=1,2)  \ .
\end{equation}
 Consider the  subset
$$ A_2 = A \setminus \{ z : W_z(g^1_0, g^1_1)(z) \cdot W_z(g^2_0, g^2_1)(z) = 0 \}$$
of $A$.  We define a new metric
$$d\tau^2 =  \bigg(|h|\dfrac{\Pi_{j=1}^{q_1}|G^1_j|^{(1-\frac{1}{m_{1j}}-\delta)p_1}\Pi_{j=1}^{q_2}|G^2_j|^{(1-\frac{1}{m_{2j}}-\delta)p_2}}{|W(g^1_0,g^1_1)|^{p_1}|W(g^2_0,g^2_1)|^{p_2}}\bigg)^{\frac{2}{1-p_1 -p_2}}|dz|^2 \ $$
on $A_2$ 
(where again $G^l_j := a^{lj}_0g^l_1 - a^{lj}_1g^l_0\: ( l= 1,2)$ and $h$ is defined with respect
to the coordinate $z$ on $A_2 \subset A$ and $W(g^l_0,g^l_1) = W_z(g^l_0,g^l_1)$).

It is easy to see that by the same arguments as in step 2 of the proof of Theorem \ref{T1}
(applied for each $l=1,2$), we get that $d\tau$ is a continuous nowhere vanishing and flat metric on $A_2$, which is
moreover independant of the choice of the coordinate $z$.

The key point is to prove the following claim :
\begin{claim} \label{Cl2}
 $d\tau^2$ is complete on the set $\{ z : |z| = r\}\cup \{z : \Pi_{l=1,2}W(g^l_0, g^l_1)(z) = 0 \},$ i.e., the set $\{ z : |z| = r\}\cup \{z : \Pi_{l=1,2}W(g^l_0, g^l_1)(z)=0 \}$ is at infinite distance from any interior point in $A_2$. 
\end{claim}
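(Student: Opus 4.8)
\textbf{Proof proposal for Claim \ref{Cl2}.}

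The plan is to adapt, essentially verbatim, the proof of Claim \ref{Cl1} from the proof of Theorem \ref{T1}, treating the two factors coming from $l=1$ and $l=2$ simultaneously. First I would analyze the behaviour of $d\tau$ near a point $z_0$ with $\Pi_{l=1,2}W(g^l_0,g^l_1)(z_0)=0$. For each $l$ such that $W(g^l_0,g^l_1)(z_0)=0$, there are two cases exactly as before: either $G^l_j(z_0)=0$ for some $j$, in which case $\nu_{G^l_j}(z_0)\geq m_{lj}$ and, after changing indices so that $g^l_0(z_0)\neq 0$, the identity $\nu_{W(g^l_0,g^l_1)}(z_0)=\nu_{G^l_j}(z_0)-1$ together with $m_{lj}\geq 2$ gives a local vanishing order contribution $\leq -\tfrac{2\delta p_l}{1-p_1-p_2}$ from that factor; or $G^l_j(z_0)\neq 0$ for all $j$, giving a contribution $\leq -\tfrac{p_l}{1-p_1-p_2}$. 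Summing the contributions of the one or two vanishing factors (the non-vanishing factor being continuous and nonzero near $z_0$), and using $0<\delta<1$ together with \eqref{4.2}, I obtain a bound $|d\tau|\geq C|z-z_0|^{-\delta p_{l_0}/(1-p_1-p_2)}|dz|$ near $z_0$ for some $l_0$ with $\delta p_{l_0}/(1-p_1-p_2)>1$, so $d\tau$ is complete toward every point of $\{z:\Pi_{l=1,2}W(g^l_0,g^l_1)(z)=0\}$.

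Next I would handle completeness toward the outer boundary $\{z:|z|=r\}$ by the same divergent-path argument as in Claim \ref{Cl1}. Assuming $d\tau$ is not complete there, I take a path $\gamma:[0,1)\to A_2$ of finite $d\tau$-length with $\gamma(1)\in\{|z|=r\}$ and $\mathrm{dist}(\gamma(0);\{|z|=1/r\})>2|\gamma|$, develop a small flat disk around $\gamma(0)$ via an isometry $\Phi$, and extend $\Phi$ to a maximal disk $\Delta_R$ with $R\leq|\gamma|$; maximality forces the $\Phi$-image of a radius to be a divergent curve $\Gamma_0$ in $M$ landing on $\{|z|=r\}$ (it cannot reach points where some $W(g^l_0,g^l_1)$ vanishes, by the completeness just established). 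Writing $f^l:=g^l\circ\Phi$, $F^l_j:=G^l_j\circ\Phi$ and using $W_w(f^l_0,f^l_1)=(W_z(g^l_0,g^l_1)\circ\Phi)\tfrac{dz}{dw}$ together with the isometry relation $|dw|=|d\tau|$, a computation parallel to the one in Theorem \ref{T1} (now using $|\tfrac{dz}{dw}|$ expressed through both Wronskians and the definition of $p_1,p_2$) yields
$$\Phi^*ds^2=\left(\dfrac{||f^1||\,||f^2||\,|W(f^1_0,f^1_1)|^{p_1}|W(f^2_0,f^2_1)|^{p_2}}{\Pi_{j=1}^{q_1}|F^1_j|^{(1-\frac{1}{m_{1j}}-\delta)p_1}\Pi_{j=1}^{q_2}|F^2_j|^{(1-\frac{1}{m_{2j}}-\delta)p_2}}\right)^2|dw|^2.$$
Reorganizing the exponents via $p_l=1/(\gamma_l-2-q_l\delta)$, the right-hand side factors as a product over $l=1,2$ of expressions of the form $\bigl(||f^l||^{q_l-2-\sum 1/m_{lj}-q_l\delta}|W(f^l_0,f^l_1)|\big/\Pi_j|F^l_j|^{1-1/m_{lj}-\delta}\bigr)^{2p_l}$, each of which is controlled by Lemma \ref{L4} (applicable since $\gamma_l-2-q_l\delta>q_l\delta_0'>0$ for a suitable choice, which is exactly the hypothesis behind the choice of $\delta$). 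Hence $\Phi^*ds^2\leq C\bigl(\tfrac{2R}{R^2-|w|^2}\bigr)^{2(p_1+p_2)}|dw|^2$, and since $0<p_1+p_2<1$ the length of $\Gamma_0$ in $M$ is finite, contradicting the completeness of $M$.

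The main obstacle is purely bookkeeping: making sure that Lemma \ref{L4} can indeed be applied separately to each index $l$, which requires that the single small parameter $\delta$ chosen near $\delta_0$ simultaneously satisfies $\gamma_l-2-\sum_j\tfrac{1}{m_{lj}}-q_l\delta>0$ — equivalently $\gamma_l-2-q_l\delta>0$ — for both $l=1,2$; this is guaranteed by the choice of $\delta_0$ and $\delta<\delta_0$ made just before the claim, but one must also keep the $\delta$ from Lemma \ref{L4} (the exponent shift) and the $\delta$ in the metric consistent. Once the exponent arithmetic is checked, everything else is a routine transcription of Claim \ref{Cl1}, with the two Wronskian factors playing the role of the single one and $p_1+p_2$ playing the role of $p$.
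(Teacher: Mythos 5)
Your proposal is correct and follows essentially the same route as the paper: the paper likewise disposes of the set $\{z:\Pi_{l=1,2}W(g^l_0,g^l_1)(z)=0\}$ by invoking the method of Claim \ref{Cl1} (whose details you correctly transcribe, including the key point that the factor with non-vanishing Wronskian is bounded away from zero near $z_0$ and that \eqref{4.2} gives $\delta p_l/(1-p_1-p_2)>1$), and then runs the identical divergent-path argument toward $\{|z|=r\}$, arriving at the same product formula for $\Phi^*ds^2$, the same application of Lemma \ref{L4} to each factor via $\gamma_l-2-q_l\delta>q_l\cdot 0$, and the same finite-length contradiction from $0<p_1+p_2<1$. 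No gaps.
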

It is easy to see that by the same method as in the proof of Claim~\ref{Cl1} in the proof of Theorem \ref{T1}, we may show that $d\tau$ is complete on $\{z : \Pi_{l=1,2}W(g^l_0, g^l_1)(z) = 0 \}.$

Now assume $d\tau$ is not complete on $\{z : |z| = r \}.$ Then there exists $\gamma: [0, 1) \rightarrow A_2,$ where $\gamma (1) \in \{z : |z| = r \}$, so that $|\gamma| < \infty.$ Furthermore, we may also assume that $dist(\gamma (0), \{z : |z| = 1/r\}) > 2|\gamma|.$ Consider a small disk $\Delta$ with center at $\gamma (0).$ Since $d\tau$ is flat, $\Delta$ is isometric to an ordinary disk in the plane. Let $\Phi: \{|w| < \eta \}\rightarrow \Delta$ be this isometry. Extend $\Phi$, as a local isometry into $A_2,$ to the largest disk $\{|w| < R\} = \Delta_R$ possible. Then $R \leq |\gamma|.$ The reason that $\Phi$ cannot be extended to a larger disk is that the image goes to the outside boundary $\{z : |z| = r\}$ of $A_2.$ More precisely, there exists a point $w_0$ with $|w_0| =R$ so that $\Phi(\overline{0,w_0}) = \Gamma_0$ is a divergent curve on $A.$\\
The map $\Phi(w)$ is locally biholomorphic, and the metric on $\Delta_R$ induced from $ds^2$ through $\Phi$ is given by
\begin{equation} \label{4.3} \Phi^*ds^2 =  |h \circ \Phi|^2||g^1 \circ \Phi||^2||g^2 \circ \Phi||^2|\frac{dz}{dw}|^2|dw|^2 \  .
\end{equation}
On the other hand, $\Phi$ is isometric, so we have
$$ |dw| = |d\tau|= \bigg(|h|\dfrac{\Pi_{j=1}^{q_1}|G^1_j|^{(1-\frac{1}{m_{1j}}-\delta)p_1}\Pi_{j=1}^{q_2}|G^2_j|^{(1-\frac{1}{m_{2j}}-\delta)p_2}}{|W(g^1_0,g^1_1)|^{p_1}|W(g^2_0,g^2_1)|^{p_2}}\bigg)^{\frac{1}{1-p_1 -p_2}}|dz|$$
$$\Rightarrow |\dfrac{dw}{dz}|^{1-p_1-p_2} = |h|\dfrac{\Pi_{j=1}^{q_1}|G^1_j|^{(1-\frac{1}{m_{1j}}-\delta)p_1}\Pi_{j=1}^{q_2}|G^2_j|^{(1-\frac{1}{m_{2j}}-\delta)p_2}}{|W(g^1_0,g^1_1)|^{p_1}|W(g^2_0,g^2_1)|^{p_2}}.$$
For each $l=1,2,$ we set $f^l:= g^l(\Phi), f^l_0 := g^l_0(\Phi), f^l_1 := g^l_1(\Phi)$ and $F^l_j:= G^l_j(\Phi).$  Since
$$W_w(f^l_0, f^l_1) = (W_z(g^l_0, g^l_1) \circ \Phi)\frac{dz}{dw}, (l=1,2), $$
we obtain
\begin{equation} \label{4.4} 
 |\dfrac{dz}{dw}| = \dfrac{\Pi_{l=1,2}|W(f^l_0,f^l_1)|^{p_l}}{|h(\Phi)|\Pi_{l=1,2}\Pi_{j=1}^{q_l}|F^l_j|^{(1-\frac{1}{m_{lj}}-\delta)p_l}}\ .
 \end{equation}
By (\ref{4.3}) and (\ref{4.4}), we get
\begin{align*}
\Phi^*ds^2& = \bigg(\Pi_{l=1,2} \dfrac{||f^l||(|W(f^l_0,f^l_1)|)^{p_l}}{ \Pi_{j=1}^{q_l}|F^l_j|^{(1-\frac{1}{m_{lj}}-\delta)p_l}}\bigg)^2|dw|^2\\
&= \Pi_{l=1,2}\bigg( \dfrac{||f^l||^{q_l-2-\sum_{j=1}^{q_l}\frac{1}{m_{lj}}-q_l\delta}|W(f^l_0,f^l_1)|}{ \Pi_{j=1}^q|F^l_j|^{1-\frac{1}{m_{lj}}-\delta}}\bigg)^{2p_l}|dw|^2.
\end{align*}
Using the Lemma \ref{L4}, we obtain
$$\Phi^*ds^2 \leqslant C^{2(p_1+p_2)}_0.(\dfrac{2R}{R^2 -|w|^2})^{2(p_1+p_2)}|dw|^2.$$
 Since $0 < p_1+p_2 < 1$ by (\ref{4.2}), it then follows that 
$$d_{\Gamma_0} \leqslant \int_{\Gamma_0}ds = \int_{\overline{0,w_0}}\Phi^*ds \leqslant C^{p_1+p_2}_0. \int_0^R(\dfrac{2R}{R^2 -|w|^2})^{p_1+p_2}|dw|  < + \infty, $$
where $d_{\Gamma_0}$ denotes the length of the divergent curve $\Gamma_0$ in $M,$ contradicting the assumption of completeness of $M.$ Claim \ref{Cl2} is proved.

 {\bf Steps 3 and 4 for the case (i):}
 These steps are  analogue to the corresponding steps in the proof of Theorem \ref{T1}.
Define $d\tilde{\tau}^2 =\lambda^2(z) |dz|^2 \ $ on 
$$\tilde{A}_2 := \{ z : 1/r < |z| < r \}  \setminus $$
$$\setminus \{ z : W_z(g^1_0, g^1_1)(z) \cdot W_z(g^2_0, g^2_1)(z) \cdot W_z(g^1_0, g^1_1)(1/z) \cdot W_z(g^2_0, g^2_1)(1/z)= 0 \} \, , 
$$
 where
\begin{align*}
\lambda(z)& = \bigg(|h(z)|\dfrac{\Pi_{j=1}^{q_1}|G^1_j(z)|^{(1-\frac{1}{m_{1j}}-\delta)p_1}\Pi_{j=1}^{q_2}|G^2_j(z)|^{(1-\frac{1}{m_{2j}}-\delta)p_2}}{|W_z(g^1_0,g^1_1)(z)|^{p_1}|W_z(g^2_0,g^2_1)(z)|^{p_2}}\bigg)^{\frac{1}{1-p_1 -p_2}}\\
& \times \bigg(|h(1/z)|\dfrac{\Pi_{j=1}^{q_1}|G^1_j(1/z)|^{(1-\frac{1}{m_{1j}}-\delta)p_1}\Pi_{j=1}^{q_2}|G^2_j(1/z)|^{(1-\frac{1}{m_{2j}}-\delta)p_2}}{|W_z(g^1_0,g^1_1)(1/z)|^{p_1}|W_z(g^2_0,g^2_1)(1/z)|^{p_2}}\bigg)^{\frac{1}{1-p_1 -p_2}}.
\end{align*}
By using Claim \ref{Cl2}, the continuous nowhere vanishing and flat metric $d\title{\tau}$
on $A_2$ is also complete.  Using the identical argument of step 4 in the proof of Theorem \ref{T1}
to the open Riemann surface $(\tilde{A}_2, d\tilde{\tau})$ produces a contradiction, so assumption (\ref{ass4}) was wrong.
This implies case (i) of the Theorem \ref{T2}.\\

\indent We finally consider the case (ii) of Theorem \ref{T2} (where $g^2 \equiv constant$ and $g^1 \not\equiv constant$). Suppose that 
$\gamma_1 > 3.$ We can choose $\delta$ with 
$$ \dfrac{\gamma_1 - 3}{q_1} > \delta > \dfrac{\gamma_1 - 3}{q_1 +1}, $$
and set $p = 1/ (\gamma_1 - 2 -q_1\delta).$ Then 
$$0 < p < 1 , \ \frac{p}{1-p} > \frac{\delta p}{1-p} > 1 . $$
Set $$d\tau^2 = |h|^{\frac{2}{1-p}}\bigg(\dfrac{\Pi_{j=1}^{q_1}|G^1_j|^{1-\frac{1}{m_{1j}}-\delta}}{|W(g^1_0,g^1_1)|}\bigg)^{\frac{2p}{1-p}}|dz|^2 .\ $$
Using this metric, by the analogue arguments as in step 2 to step 4 of the proof of Theorem \ref{T1},  we get the case (ii) of Theorem \ref{T2}.
\end{proof}

{\bf Acknowledgements.} 
This work was completed during a stay of the authors at 
the Vietnam Institute for Advanced Study in Mathematics (VIASM).
The research of the second named author is partially supported by a NAFOSTED grant of Vietnam.

\vspace{1cm}

\noindent {\it Gerd Dethloff $^{1,2}$and Pham Hoang Ha$^{3}$\\

 \noindent $^1$ Universit\'e Europ\'eenne de Bretagne, France\\
 $^2$ Universit\'e de Brest \\
 Laboratoire de Math\'{e}matiques de Bretagne Atlantique - \\
 UMR CNRS 6205\\ 
6, avenue Le Gorgeu, BP 452\\
29275 Brest Cedex, France\\
$^3$ Department of Mathematics\\
Hanoi National University of Education\\
136 XuanThuy str., Hanoi, Vietnam}\\

\noindent Email : Gerd.Dethloff@univ-brest.fr ; phamhoangha23@gmail.com
\end{document}